\newcommand{\tU}{\mathcal{U}}
\begin{document}
\newtheoremstyle{all}%  |?name>
  {11pt}%      |?Space above>
  {11pt}%      |?Space below>
  {\slshape}%         |?Body font>
  {}%         |?Indent amount>|ntnote1
  {\bfseries}% |?Theorem head font>
  {}%        |?Punctuation after theorem head>
  {.5em}%     |?Space after theorem head>|ntnote2
  {}%         |?Theorem head spec (can be left empty, meaning `normal')>

\theoremstyle{all}
\newtheorem{theorem}{Theorem}[section]
\newtheorem{proposition}[theorem]{Proposition}
\newtheorem{corollary}[theorem]{Corollary}
\newtheorem{lemma}[theorem]{Lemma}
\newtheorem{definition}[theorem]{Definition}
\newtheorem{ques}[theorem]{Question}
\newtheorem{conj}[theorem]{Conjecture}
\newtheorem{rem}[theorem]{Remark}

\newcommand{\nc}{\newcommand}
\newcommand{\renc}{\renewcommand}
  \nc{\kac}{\kappa^C}
\nc{\alg}{T}
\nc{\supp}{\operatorname{supp}}
\nc{\msupp}{\operatorname{\mu supp}}

\nc{\Lco}{L_{\la}}
\nc{\qD}{q^{\nicefrac 1D}}
\nc{\ocL}{M_{\la}}
\nc{\excise}[1]{}
\nc{\Dbe}{D^{\uparrow}}
\nc{\Dfg}{D^{\mathsf{fg}}}

\nc{\tr}{\operatorname{tr}}
\newcommand{\Mirkovic}{Mirkovi\'c\xspace}
\nc{\tla}{\mathsf{t}_\la}
\nc{\llrr}{\langle\la,\rho\rangle}
\nc{\lllr}{\langle\la,\la\rangle}
\nc{\K}{\mathbbm{k}}
\nc{\Stosic}{Sto{\v{s}}i{\'c}\xspace}
\nc{\cd}{\mathcal{D}}
\nc{\cT}{\mathcal{T}}
\nc{\vd}{\mathbb{D}}
\nc{\R}{\mathbb{R}}
\renc{\wr}{\operatorname{wr}}
  \nc{\Lam}[3]{\La^{#1}_{#2,#3}}
  \nc{\Lab}[2]{\La^{#1}_{#2}}
  \nc{\Lamvwy}{\Lam\Bv\Bw\By}
  \nc{\Labwv}{\Lab\Bw\Bv}
  \nc{\nak}[3]{\mathcal{N}(#1,#2,#3)}
  \nc{\hw}{highest weight\xspace}
  \nc{\al}{\alpha}
  \nc{\be}{\beta}
  \nc{\bM}{\mathbf{m}}
  \nc{\bkh}{\backslash}
  \nc{\Bi}{\mathbf{i}}
  \nc{\Bj}{\mathbf{j}}
\nc{\bd}{\mathbf{d}}
\nc{\D}{\mathcal{D}}
\nc{\mmod}{\operatorname{-mod}}  
\nc{\immod}{\operatorname{-\widehat{mod}}}  
\newcommand{\red}{\mathfrak{r}}

\nc{\RAA}{R^\A_A}
  \nc{\Bv}{\mathbf{v}}
  \nc{\Bw}{\mathbf{w}}
\nc{\Id}{\operatorname{Id}}
  \nc{\By}{\mathbf{y}}
\nc{\eE}{\EuScript{E}}
  \nc{\Bz}{\mathbf{z}}
  \nc{\coker}{\mathrm{coker}\,}
  \nc{\C}{\mathbb{C}}
  \nc{\ch}{\mathrm{ch}}
  \nc{\de}{\delta}
  \nc{\ep}{\epsilon}
  \nc{\Rep}[2]{\mathsf{Rep}_{#1}^{#2}}
  \nc{\Ev}[2]{E_{#1}^{#2}}
  \nc{\fr}[1]{\mathfrak{#1}}
  \nc{\fp}{\fr p}
  \nc{\fq}{\fr q}
  \nc{\fl}{\fr l}
  \nc{\fgl}{\fr{gl}}
\nc{\rad}{\operatorname{rad}}
\nc{\ind}{\operatorname{ind}}
  \nc{\GL}{\mathrm{GL}}
  \nc{\Hom}{\mathrm{Hom}}
  \nc{\im}{\mathrm{im}\,}
  \nc{\La}{\Lambda}
  \nc{\la}{\lambda}
  \nc{\mult}{b^{\mu}_{\la_0}\!}
  \nc{\mc}[1]{\mathcal{#1}}
  \nc{\om}{\omega}
\nc{\gl}{\mathfrak{gl}}
  \nc{\cF}{\mathcal{F}}
 \nc{\cC}{\mathcal{C}}
  \nc{\Mor}{\mathsf{Mor}}
  \nc{\Ob}{\mathsf{Ob}}
  \nc{\Vect}{\mathsf{Vect}}
\nc{\gVect}{\mathsf{gVect}}
  \nc{\modu}{\mathsf{-mod}}
\nc{\pmodu}{\mathsf{-pmod}}
  \nc{\qvw}[1]{\La(#1 \Bv,\Bw)}
  \nc{\van}[1]{\nu_{#1}}
  \nc{\Rperp}{R^\vee(X_0)^{\perp}}
  \nc{\si}{\sigma}
  \nc{\croot}[1]{\al^\vee_{#1}}
\nc{\di}{\mathbf{d}}
  \nc{\SL}[1]{\mathrm{SL}_{#1}}
  \nc{\Th}{\theta}
  \nc{\vp}{\varphi}
  \nc{\wt}{\mathrm{wt}}
\nc{\te}{\tilde{e}}
\nc{\tf}{\tilde{f}}
\nc{\hwo}{\mathbb{V}}
\nc{\soc}{\operatorname{soc}}
\nc{\cosoc}{\operatorname{cosoc}}
 \nc{\Q}{\mathbb{Q}}

  \nc{\Z}{\mathbb{Z}}
  \nc{\Znn}{\Z_{\geq 0}}
  \nc{\ver}{\EuScript{V}}
  \nc{\Res}[2]{\operatorname{Res}^{#1}_{#2}}
  \nc{\edge}{\EuScript{E}}
  \nc{\Spec}{\mathrm{Spec}}
  \nc{\tie}{\EuScript{T}}
  \nc{\ml}[1]{\mathbb{D}^{#1}}
  \nc{\fQ}{\mathfrak{Q}}
        \nc{\fg}{\mathfrak{g}}
  \nc{\Uq}{U_q(\fg)}
        \nc{\bom}{\boldsymbol{\omega}}
\nc{\bla}{{\underline{\boldsymbol{\la}}}}
\nc{\bmu}{{\underline{\boldsymbol{\mu}}}}
\nc{\bal}{{\boldsymbol{\al}}}
\nc{\bet}{{\boldsymbol{\eta}}}
\nc{\rola}{X}
\nc{\wela}{Y}
\nc{\fM}{\mathfrak{M}}
\nc{\fN}{\mathfrak{N}}
\nc{\fX}{\mathfrak{X}}
\nc{\fH}{\mathfrak{H}}
\nc{\fE}{\mathfrak{E}}
\nc{\fF}{\mathfrak{F}}
\nc{\fI}{\mathfrak{I}}
\nc{\qui}[2]{\fM_{#1}^{#2}}
\nc{\cL}{\mathcal{L}}
\nc{\cM}{\mathcal{M}}
\nc{\ca}[2]{\fQ_{#1}^{#2}}
\nc{\cat}{\mathcal{V}}
\nc{\cata}{\mathfrak{V}}
\nc{\catf}{\mathscr{V}}
\nc{\hl}{\mathcal{X}}
%\nc{\hl}{\mathfrak{X}}
\nc{\pil}{{\boldsymbol{\pi}}^L}
\nc{\pir}{{\boldsymbol{\pi}}^R}
\nc{\cO}{\mathcal{O}}
\nc{\Ko}{\text{\Denarius}}
\nc{\Ei}{\fE_i}
\nc{\Fi}{\fF_i}
\nc{\fil}{\mathcal{H}}
\nc{\brr}[2]{\beta^R_{#1,#2}}
\nc{\brl}[2]{\beta^L_{#1,#2}}
\nc{\so}[2]{\EuScript{Q}^{#1}_{#2}}
\nc{\EW}{\mathbf{W}}
\nc{\rma}[2]{\mathbf{R}_{#1,#2}}
\nc{\Dif}{\EuScript{D}}%{\operatorname{Dif}}
\nc{\MDif}{\EuScript{E}}
\renc{\mod}{\mathsf{mod}}
\nc{\modg}{\mathsf{mod}^g}
\nc{\fmod}{\mathsf{mod}^{fd}}
\nc{\id}{\mathbbm{1}}
\nc{\DR}{\mathbf{DR}}
\nc{\End}{\operatorname{End}}
\nc{\Fun}{\operatorname{Fun}}
\nc{\Ext}{\operatorname{Ext}}
\nc{\tw}{\tau}
\nc{\A}{\EuScript{A}}
\nc{\Loc}{\mathsf{Loc}}
\nc{\eF}{\EuScript{F}}
\nc{\LAA}{\Loc^{\A}_{A}}
\nc{\perv}{\mathsf{Perv}}
\nc{\gfq}[2]{B_{#1}^{#2}}
\nc{\qgf}[1]{A_{#1}}
\nc{\qgr}{\qgf\rho}
\nc{\tqgf}{\tilde A}
\nc{\Tr}{\operatorname{Tr}}
\nc{\Tor}{\operatorname{Tor}}
\nc{\cQ}{\mathcal{Q}}
\nc{\st}[1]{\Delta(#1)}
\nc{\cst}[1]{\nabla(#1)}
\nc{\ei}{\mathbf{e}_i}
\nc{\Be}{\mathbf{e}}
\nc{\Hck}{\mathfrak{H}}
%\nc{\fH}{\Hck}
%\nc{\fM}{\mathfrak{M}}
\renc{\P}{\mathbb{P}}
\nc{\bbB}{\mathbb{B}}

\nc{\cI}{\mathcal{I}}
\nc{\cG}{\mathcal{G}}
\nc{\cH}{\mathcal{H}}
\nc{\coe}{\mathfrak{K}}
\nc{\pr}{\operatorname{pr}}
\nc{\bra}{\mathfrak{B}}
\nc{\rcl}{\rho^\vee(\la)}

\nc{\dU}{{\stackon[8pt]{\tU}{\cdot}}}
\newcommand{\arxiv}[1]{\href{http://arxiv.org/abs/#1}{\tt arXiv:\nolinkurl{#1}}}
\nc{\RHom}{\mathrm{RHom}}
\nc{\tcO}{\tilde{\cO}}
\nc{\Yon}{\mathscr{Y}}
\nc{\sI}{{\mathsf{I}}}

% Margin stuff from jason
%\oddsidemargin=0pt
%\evensidemargin=0pt
%\topmargin=0in
%\headheight=0pt
%\headsep=0pt
%\setlength{\textheight}{9in}
%\setlength{\textwidth}{6.5in}
\setcounter{tocdepth}{1}
\numberwithin{equation}{section}
\newcounter{subeqn}
\renewcommand{\thesubeqn}{\theequation\alph{subeqn}}
\newcommand{\subeqn}{%
  \refstepcounter{subeqn}% Step subequation number
  \tag{\thesubeqn}% Label equation
}\makeatletter
\@addtoreset{subeqn}{equation}
\newcommand{\newseq}{%
  \refstepcounter{equation}% Step subequation number
}
\excise{
\newenvironment{block}
\newenvironment{frame}
\newenvironment{tikzpicture}
\newenvironment{equation*}
}

\baselineskip=1.1\baselineskip

 \usetikzlibrary{decorations.pathreplacing,backgrounds,decorations.markings}
\tikzset{wei/.style={draw=red,double=red!40!white,double distance=1.5pt,thin}}
\tikzset{awei/.style={draw=blue,double=blue!40!white,double distance=1.5pt,thin}}
\tikzset{bdot/.style={fill,circle,color=blue,inner sep=3pt,outer
    sep=0}}
\tikzset{dir/.style={thick,postaction={decorate,decoration={markings,
    mark=at position .8 with {\arrow[scale=1.3]{>}}}}}}
\tikzset{rdir/.style={postaction={decorate,decoration={markings,
    mark=at position .8 with {\arrow[scale=1.3]{>}}}}}}
\tikzset{edir/.style={postaction={decorate,decoration={markings,
    mark=at position .2 with {\arrow[scale=1.3]{<}}}}}}
\begin{center}
\noindent {\large  \bf A categorical action on quantized quiver varieties}
\medskip

\noindent {\sc Ben Webster}\footnote{Supported by the NSF under Grant DMS-1151473 and  by the NSA under Grant H98230-10-1-0199.}\\  
Department of Mathematics\\ Northeastern University\\
Boston, MA\\
Email: {\tt b.webster@neu.edu}
\end{center}
\bigskip
{\small
\begin{quote}
\noindent {\em Abstract.}
In this paper, we describe a categorical action of any symmetric Kac-Moody
algebra on a category of quantized coherent sheaves on
Nakajima quiver varieties.  By ``quantized coherent sheaves,'' we mean a category of sheaves of
modules over a deformation quantization of the natural symplectic
structure on quiver varieties.  This
action is a direct categorification of the geometric construction of universal
enveloping algebras by Nakajima.
\end{quote}
}

\vspace{1cm}

\renc{\thetheorem}{\Alph{theorem}}

\tableofcontents

Let $\fg$ be an arbitrary Kac-Moody algebra with symmetric Cartan
matrix, and $\Gamma$ its associated Dynkin graph.  Nakajima showed
that there exists a remarkable connection between the algebra $U(\fg)$
and certain varieties, called {\bf quiver varieties}, constructed
directly from the graph $\Gamma$.  This construction takes the form of a
map from $U(\fg)$ to the Borel-Moore homology of a quiver analogue of the
Steinberg variety \cite{Nak98}.

Both the source and target of this map have natural categorifications:
\begin{itemize}
\item the algebra $U(\fg)$ is categorified by a 2-category
  $\tU$.  Actually several variations on the theme of this category
  have been introduced by Rouquier \cite{Rou2KM}, Khovanov-Lauda
  \cite{KLIII}, and Cautis-Lauda \cite{CaLa}; we will use the
  formalism of the last of these.  A 2-functor from this category into
  another 2-category is called a {\bf categorical action} of $\fg$ in
  this 2-category.
\item the Borel-Moore homology of the ``Steinberg'' of a symplectic
  resolution $\fM$ (such as a quiver variety) is categorified by a certain category
  of sheaves on $\fM\times \fM$.  The structure sheaf of $\fM$
  possesses a quantization, in the sense of \cite{BK04a}, and the
  category of interest to us is that of bimodules over this quantization
  which satisfy a ``Harish-Chandra'' property, as described by Braden,
  Proudfoot and the author \cite[\S 6.1-2]{BLPWquant}.  Viewed correctly,
  these bimodules on the quiver varieties associated to a single
  highest weight $\la$ can be organized into a 2-category, which we denote $\EuScript{Q}^\la$.
\end{itemize}
Thus, this previous work suggests how to categorify Nakajima's map:
\begin{theorem}\label{th:main}
For each highest weight
  $\la$, there is a categorical representation of $\fg$ in the
  2-category $\EuScript{Q}^\la$; taking ``characteristic cycles'' of these
  bimodules recovers the geometric construction of
  $\dot{U}$ by Nakajima. 
\end{theorem}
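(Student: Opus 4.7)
The plan is to realize the 2-functor $\tU \to \EuScript{Q}^\la$ by directly quantizing the ingredients of Nakajima's geometric construction. On objects, a weight space $\la - \sum v_i\al_i$ is sent to the category of Harish-Chandra bimodules over a fixed quantization of $\fM(\bv,\bw)$. For the generating 1-morphisms, the crucial observation is that Nakajima's Hecke correspondence $\mathfrak{Z}_i \subset \fM(\bv,\bw) \times \fM(\bv+\ei,\bw)$ itself arises from Hamiltonian reduction of a simple linear ambient space, so it carries a canonical quantization; I would define $\fE_i$ and $\fF_i$ to be the Harish-Chandra bimodules underlying these quantizations, composed with the line bundle twists coming from the quantization parameter $\la$.

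The essential content is the construction of the generating 2-morphisms of $\tU$ and the verification of the Cautis-Lauda relations. Dots should come from the natural action of polynomial functions on $\fgl(\bv)$ via the moment map; crossings from the symmetric group action on iterated Hecke correspondences resolving flags of successive modifications, so that $\End(\fE_i^n)$ receives a canonical map from a nil-affine-Hecke algebra (or a full KLR algebra once arbitrary $i,j$ are allowed); and cups/caps from the Lagrangian nature of $\mathfrak{Z}_i$, yielding a biadjunction between $\fE_i$ and $\fF_i$ up to a predictable weight-dependent twist. The local KLR relations among dots and crossings can then be checked by a direct local computation on the quantized correspondences, mirroring the well-known cohomological calculations. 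The $\mathfrak{sl}_2$-commutator relation, expressing $\fE_i\fF_i$ in terms of $\fF_i\fE_i$ and copies of the identity, is obtained by lifting the excess-intersection short exact sequence from quiver variety geometry to a short exact sequence of quantized bimodules, using Lagrangian filtrations available in the deformation-quantization setting.

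Once the 2-functor is constructed, agreement with Nakajima's map is automatic: by construction the characteristic cycle of each $\fE_i$ or $\fF_i$ is the fundamental class of the corresponding Hecke correspondence, so the induced map on Grothendieck groups coincides with Nakajima's on generators, and hence on all of $\dot U$. The principal obstacle is the $\mathfrak{sl}_2$-commutator step: promoting the cohomological identity $[E_i,F_i] = h_i$ from an equality of cycle classes to a functorial short exact sequence of Harish-Chandra bimodules requires a uniform lift of the excess-intersection computation, carried out compatibly with the biadjunction structures and independent of the auxiliary choices made in setting up the quantization. Controlling this lift canonically across all weight spaces, and verifying it is compatible with the Cautis-Lauda normalization of bubbles and sideways crossings, is where I expect the main technical work of the paper to be concentrated.
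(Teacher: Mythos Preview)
Your choice of objects and 1-morphisms matches the paper: $\eE_i,\eF_i$ go to bimodules supported on the Hecke correspondences, obtained by Hamiltonian reduction from D-modules on the linear model. Where your plan diverges is in how the 2-morphisms and relations are produced. You propose to construct dots, crossings, cups and caps directly on the quantized side and then verify the KLR and $\mathfrak{sl}_2$ relations by ``direct local computation on the quantized correspondences'' and by ``lifting the excess-intersection short exact sequence'' to Harish-Chandra bimodules. The paper does neither of these things, and I think you are underestimating how hard that direct route would be.

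The paper's main technical leverage is that the bimodules $\mathscr{E}_i,\mathscr{F}_i$ are \emph{reductions} $\red(\tilde{\mathscr{E}}_i),\red(\tilde{\mathscr{F}}_i)$ of honest D-modules on the stacks $X^\la_\mu$, and that reduction is shown to commute with convolution (Lemmas~4.1--4.2). This means every relation can be checked \emph{before} reduction, on the D-module side, where Riemann--Hilbert is available. The KLR action (your ``local computation'') is then literally the Varagnolo--Vasserot theorem on Ext-algebras of Lusztig sheaves, transported through Riemann--Hilbert and $\red$; no new computation is done. The commutator isomorphisms (your ``excess-intersection lift'') are likewise obtained by restricting to an open locus where both sides are pushforwards from explicit moduli of triples, invoking Riemann--Hilbert, and citing Li's identities for constructible sheaves; again no direct manipulation of quantized bimodules is needed. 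Finally, the Cautis--Lauda criterion means you never have to construct cups, caps, or biadjunctions by hand: once integrability, positive grading of $\End(\mathrm{id}_\mu)$, the KLR action on the $\mathscr{F}$'s, and the two commutator isomorphisms are in place, the full 2-functor from $\tU$ comes for free. Your proposal spends effort on exactly the pieces the paper avoids, and is vague on the steps (KLR relations, commutator) that the paper handles by reduction to existing literature; without the $\red$-commutes-with-convolution mechanism, those steps are the real gap in your plan.
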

Furthermore, the form of this functor is strongly suggested by
Nakajima's work; his map is defined by sending the Chevalley
generators of $U(\fg)$ to particular correspondences, called {\bf Hecke
  correspondences}, which have natural moduli-theoretic significance.
We ``upgrade'' these correspondences to modules over deformation
quantizations, and show that these satisfy the
categorical analogues of the Chevalley presentation.
For the experts, we should note that this will not work with arbitrary
quantizations.  The quantizations we wish to consider are classified
up to isomorphism by classes in $H^2(\fM;\C)$ called {\bf periods}.
The correspondences can only be quantized when the period satisfies an
integrality condition, as we'll discuss in much more detail in
Section \ref{sec:non-integral-case}.

We regard this theorem as very strong evidence of the naturality of
the notion of a categorical $\fg$-action currently circulating in the
literature.  While defined diagrammatically in a way that might
outwardly seem arbitrary, in fact, its relations are hard-coded in the
geometry of quiver varieties.

This action of a 2-category is also quite useful in understanding
categories of sheaves on quiver varieties.  In particular, we'll use
it to understand the category of {\bf core modules} for certain
``integral'' quantizations. These are closely
related to the category of finite dimensional modules over a global
quantization of the quiver variety. Work
of Bezrukavnikov and Losev \cite{BLet} following
up on this paper has described this category for more general
quantizations, resolving a conjecture of Etingof on the structure of
finite dimensional modules over a symplectic reflection algebra.\medskip

This theorem fits into a context of older results.  
Very close analogues of the functors that appear in this
representation have already been constructed in work of Zheng
\cite{Zheng2008} and Li \cite{Li10,Li10b}. However, these authors work
in a slightly different context, which is based on constructible sheaves rather
than deformation quantizations.  The Riemann-Hilbert correspondence
has already established a tie between constructible sheaves on a
space $X$, and certain modules over a deformation quantization of
$T^*X$: the differential operators $\D_{X}$ on $X$.  From this perspective,  if
there were a space $Y$ of which a given Nakajima quiver variety were the
cotangent bundle (there almost never is) then 
sheaves of modules over the quantized structure sheaf could be thought
of as a replacement for the category of D-modules on the hypothetical space $Y$. As pointed out by Zheng \cite[\S
2.2]{Zheng2008}, his work was in a sense intended to understand
constructible sheaves with the same philosophy.  

Rouquier \cite[5.10]{RouQH} showed that Zheng's action can
be strengthened to an action of two 2-category $\tU$;
while it is not obvious that Rouquier's category is the same as that
from \cite{CaLa}, this was later proven by Brundan \cite{Brundandef}.  Rouquier's result is extremely close to the first clause
of Theorem \ref{th:main}, but a host of annoying details rise up if
one tries to derive one from the other: the result \cite[5.10]{RouQH} only establishes that the
functors induce an action on a subcategory of Zheng's category, though
this proof could likely be extended; all the above work is on
$\mathbb{Q}_\ell$-sheaves on a variety over finite fields rather than
over $\C$, etc.  None of these issues are insuperable, but we felt the
reader would be better served by an exposition which is more native
to the world of deformation quantizations.

We are also motivated by analogous results that have appeared in the
literature on coherent sheaves, for example in the work of Cautis,
Licata and Kamnitzer \cite{CK3,CKLHowe,CKL2,CKLquiver,CKL1}.  Amongst
other things, these results show that the categories of coherent
sheaves on quiver varieties carry a version of a categorical action.  In
particular, these results have lead to interesting equivalences
between derived categories of coherent sheaves.  From our perspective,
the action on sheaves over deformation quantizations is easier to work
with, since one can use topological methods for D-modules, and seems
to be the more basic object.  In forthcoming work, Cautis, Dodd and
Kamnitzer \cite{CDK} will make between classical and quantum
situations precise, showing that the action on coherent sheaves of
quiver varieties in \cite{CKLquiver} is a classical limit of the
action presented here.

More generally, this action is but one aspect of close ties between
the geometry of quiver varieties and the theory of categorical Lie
algebra actions.  It builds on work of Rouquier, Varagnolo and
Vasserot \cite{RouQH,VV} and is expanded further in further
work of the author \cite{Webqui}, which relates other categories
of modules over these deformation quantizations to known categorical $\mathfrak{g}$-actions.\medskip

Another perspective on these deformation quantizations is that they provide
a replacement for the Fukaya category of a complex symplectic
variety.  Such a connection is suggested by Kapustin and Witten
\cite[\S 11]{KW07} from a physical perspective, and the
work of Nadler and Zaslow \cite{NZ} relating constructible sheaves and the
Fukaya category of a cotangent bundle is also quite suggestive along
these lines.  In
particular, it would be very interesting to find a categorical Lie
algebra action in the 2-category of Lagrangian correspondences
constructed by Wehrheim and Woodward \cite{WeWo}.
Hopefully, instead of finding modules supported on the Hecke correspondences, one
would simply consider them as objects in the Fukaya category.

Our main technical tool is a theorem of Rouquier \cite[4.13]{RouQH}  which greatly reduces the number of relations
which need to be checked in order to confirm that a candidate is a
categorical action.   This result is quite similar to earlier works of
Chuang and Rouquier (\cite[5.27]{CR04} \& \cite[5.27]{Rou2KM}) and Cautis and Lauda
\cite[Th. 1.1]{CaLa}, which
likewise reduce the number of calculations needed, but which require
stronger hypotheses. In particular, we can rely on calculations of Varagnolo
and Vasserot from \cite{VV} for the most important check of relations
between 2-morphisms; the other conditions either follow from
general principles or are close analogues of results proven by Zheng
and Li, with proofs that can be adapted.

\subsection*{Acknowledgements}

This paper owes a great debt to
Yiqiang Li; his work was an important inspiration, and he very
helpfully pointed out a serious mistake in a draft version.  I also
want to thank Nick Proudfoot, Tony Licata and
Tom Braden; I depended very much on previous work and conversations
with them to be able to write this paper.  I thank
Sabin Cautis and Aaron Lauda for sharing an early version of their
paper with me.  I also appreciate very stimulating conversations with
Catharina Stroppel, Ivan Losev and Peter Tingley.

\subsection*{Notation}

We let $\Gamma$ be an oriented graph and $\fg$ the associated Kac-Moody algebra.  Consider the
weight lattice $\wela(\fg)$ and root lattice $\rola(\fg)$, and the
simple roots $\al_i$ and coroots $\al_i^\vee$.  Let
$c_{ij}=\al_j^{\vee}(\al_i)$ be the entries of the Cartan matrix.  

Choose an orientation $\Omega$ on $\Gamma$, let $\epsilon_{ij}$ denote the number of edges
oriented from $i$ to $j$, and
fix $$Q_{ij}(u,v)=(-1)^{\ep_{ij}}(u-v)^{c_{ij}}.$$

We let $U_q(\fg)$ denote the deformed universal enveloping algebra of
$\fg$; that is, the 
associative $\C(q)$-algebra given by generators $E_i$, $F_i$, $K_{\xi}$ for $i$ and $\xi \in \wela(\fg)$, subject to the relations:
\begin{center}
\begin{enumerate}[i)]
 \item $K_0=1$, $K_{\xi}K_{\xi'}=K_{\xi+\xi'}$ for all $\xi,\xi' \in \wela(\fg)$,
 \item $K_{\xi}E_i = q^{\al_i^{\vee}(\xi)}E_iK_{\xi}$ for all $\xi \in
 \wela(\fg)$,
 \item $K_{\xi}F_i = q^{ \al_i^{\vee}(\xi)}F_iK_{\xi}$ for all $\xi \in
 \wela(\fg)$,
 \item $E_iF_j - F_jE_i = \delta_{ij}
 \frac{\tilde{K}_i-\tilde{K}_{-i}}{q-q^{-1}}$, where
 $\tilde{K}_{\pm i}=K_{\pm d_i \al_i}$,
 \item For all $i\neq j$ $$\sum_{a+b=-c_{ij}+1}(-1)^{a} E_i^{(a)}E_jE_i^{(b)} = 0
 \qquad {\rm and} \qquad
 \sum_{a+b=-c_{ij} +1}(-1)^{a} F_i^{(a)}F_jF_i^{(b)} = 0 .$$
\end{enumerate} \end{center}

\renc{\thetheorem}{\arabic{section}.\arabic{theorem}}

\section{The 2-category $\mathcal{U}$}
\label{sec:2-category-cu}

Our primary object of study is a 2-category categorifying the
universal enveloping algebra; versions of this category have been
considered by Rouquier \cite{Rou2KM}, Khovanov and Lauda \cite{KLIII}
and Cautis and Lauda \cite{CaLa}.  Since recent work of Brundan
\cite{Brundandef} has shown that the different definitions given in
these papers are equivalent, we will work with the definition given in
\cite{Rou2KM}.  For simplicity of notation, if $u_1\dots u_n$ is the
composition of $n$ 1-morphisms in a 2-category, we let  $x^{(\ell)}$
for $x\colon u_\ell\to u_\ell$ a 2-morphism horizontal composition 
$1_{u_1}\otimes 1_{u_2}\otimes \cdots\otimes  1_{u_{\ell-1}}\otimes
x\otimes 1_{u_{\ell+1}}\otimes \cdots\otimes  1_{u_n}$, and similarly
with 
$x^{(\ell,\ell+1)}$ for  $x\colon u_\ell u_{\ell+1}\to u_\ell u_{\ell+1}$.
\begin{definition}
  $\tU$ is the 2-category with:
  \begin{itemize}
  \item objects given by the weight lattice;
\item 1-morphisms freely generated under composition and direct sum by adjoint 1-morphisms $\eF_i$ and their
    right adjoints $\eE_i$ via the (co)unit 
\[\iota\colon \id_\la \to \eE_i\eF_i \id_\la\qquad \epsilon\colon \eF_i\eE_i \id_\la\to \id_\la\qquad\text{ for }i\in
    \Gamma;\]
\item 2-morphisms
    \[y_i\colon \eF_i\to \eF_i \qquad \psi_{ij}\colon \eF_i \eF_j\to
    \eF_j\eF_i\]
    \[ \xi_{i,j,\la}\colon  \eE_i \eF_j
\id_\la\oplus\id_\la^{\oplus
      \delta_{ij}\max(0,-\la^i)} \to \eF_j\eE_i \id_\la \oplus \id_\la^{\oplus \delta_{ij}\max(0,\la^i)}.\]
  \end{itemize}
These 2-morphisms are subject to the relations:
\begin{align*}
\psi_{ij} y_i^{(1)}
&= 
\begin{cases}
  y_i^{(2)}\psi_{ij}+1 &\hbox{if $i=j$},\\
   y_i^{(2)}\psi_{ij}\hspace{48mm}&\hbox{if $i\neq j$};
\end{cases}\\
 y_j^{(1)}\psi_{ij} 
&= 
\begin{cases}
 \psi_{ij} y_j^{(2)}+1 &\hbox{if $i=j$},\\
  \psi_{ij} y_j^{(2)}\hspace{48mm}&\hbox{if $i\neq j$};
\end{cases}\\
\psi_{ji}\psi_{ij} &=
\begin{cases}
0 \hspace{61mm}&\text{if $i=j$},\\
Q_{ij}(y_i^{(1)},y_j^{(2)})&\text{$i\neq j$};
\end{cases}
\end{align*}\begin{align*}
\psi_{jk}^{(1,2)}\psi_{ik}^{(2,3)} \psi_{ij}^{(1,2)} 
&=
\begin{cases}\displaystyle
  \psi_{ij}^{(2,3)} \psi_{ik}^{(1,2)}\psi_{jk}^{(2,3)} 
  +\frac{Q_{ij}(y_k^{(1)}, y_j^{(2)})-Q_{ij}(y_k^{(3)}, y_j^{(2)})}{y_i^{(1)}-y_k^{(3)}}&\text{if $i=k$},\\
\psi_{ij}^{(2,3)} \psi_{ik}^{(1,2)} \psi_{jk}^{(2,3)}  &\text{otherwise},
\end{cases}
\end{align*}
Furthermore, let $\sigma_{i,j,\la}\colon\eF_j\eE_i \id_\la \to \eE_i \eF_j \id_\la$ be given by 
\[\sigma_{i,j,\la} =(1_{\eE_i \eF_j}\otimes \epsilon)(1_{\eE_i}\otimes \psi_{ij} \otimes
1_{\eE_j})(\iota\otimes 1_{\eF_j\eE_i }).\] We also have the relation
\[\xi_{i,j,\la}^{-1}=
\begin{cases}
  \sigma_{i,j,\la} & i\neq j\\
  \begin{bmatrix}
    \sigma_{i,j,\la}\\
    \epsilon\\
y_i^{(1)}\epsilon\\
\vdots\\
(y_i^{(1)})^{\la^i-1}\epsilon
  \end{bmatrix}& i=j, \la^i\geq 0\\
  \begin{bmatrix}
    \sigma_{i,j,\la}&
    \iota &
\iota y_i^{(1)} &
\cdots &
\iota (y_i^{(1)})^{-\la^i-1}
  \end{bmatrix}& i=j, \la^i\leq 0\\
\end{cases}
\]
\end{definition}
As in \cite{KLIII}, we let $\dU$ denote the 2-category where every
Hom-category is replaced by its idempotent completion; we note that since
every object in $\tU$ has a finite-dimensional degree 0 part of its endomorphism algebra,
every Hom-category satisfies the Krull-Schmidt property.

This 2-category is a categorification of the universal enveloping
algebra is the sense that:
\begin{theorem}[\mbox{\cite[$\epsilon.8$]{WebCBerr}}]
  The graded  Grothendieck group of $\dU$ is isomorphic to $\dot{\bf U}^\Z_q$,
  Lusztig's integral modified quantum universal enveloping algebra.
\end{theorem}
The algebra $\dot{\bf U}_q$ can be thought of as $U_q(\fg)$ with additional idempotents $1_\la$
for integral weights $\la$
which satisfy the relations of projection to the $\la$-weight space.
The integral form $\dot{\bf U}^\Z_q$ is generated over $\Z[q,q^{-1}]$ by
$1_\la,E_i1_\la,F_i1_\la$ for all $\la$.
The map from the Grothendieck group sends the class of the 1-morphism $[\eE_i\colon \la\to
\la+\al_i]$ to $E_i1_\la$, and similarly for $\eF_i$.  
This theorem was first conjectured by Khovanov and
Lauda \cite{KLIII} and proven by them in the special case of
$\mathfrak{sl}_n$.  

\section{Quiver varieties}
\label{sec:quiver-varieties}

Recall that 
$\Gamma$ denotes the Dynkin graph of $\fg$.  
\begin{definition}
 For each
orientation $\Omega$ of $\Gamma$ (thought of as a subset of the edges
of the oriented double), a {\bf representation of
  $(\Gamma,\Omega)$ with shadows} is \begin{itemize}
\item a pair of finite dimensional $\C$-vector spaces $V=\oplus_{i\in \Gamma}V_i$ and $W=\oplus_{i\in \Gamma}W_i$,
  graded by the vertices of $\Gamma$, and
\item a map $x_e:V_{\omega(e)}\to V_{\al(e)}$ for each oriented edge
  (as usual, $\al$ and $\omega$ denote the head and tail of an
  oriented edge), and
\item a map $z:V\to W$ that preserves grading.
\end{itemize}
We let $\Bw$ and $\Bv$ denote $\Gamma$-tuples of integers.
\end{definition}  
For now, we fix an orientation $\Omega$, though we will sometimes wish
to consider the collection of all orientations.
With this choice, we
have the {\bf universal $(\Bw,\Bv)$-dimensional representation}
$$E_{\Bv,\Bw}=\bigoplus_{i\to  j}\Hom(\C^{v_i},\C^{v_j})
\oplus\bigoplus_i \Hom(\C^{v_i},\C^{w_i}).$$ 
In moduli terms, this is the moduli space of actions of the quiver (in
the sense above) on the vector spaces $\C^\Bv,\C^\Bw$, with their chosen bases considered as additional structure.

If we wish to consider the moduli space of representations where
$V$ has fixed graded dimension (rather than of actions on a fixed
vector space), we should quotient by the group of isomorphisms of
quiver representations: $G_\Bv=\prod_i\GL(\C^{v_i})$ acting by pre- and
post-composition.  The result is the {\bf moduli stack of $\Bv$-dimensional representations shadowed by $\C^\Bw$}, which we can
define as the stack quotient $$X^\Bw_\Bv=E_{\Bv,\Bw}/G_\Bv.$$ This is
not a scheme in the usual sense, but rather a smooth Artin stack.
Since we will only be interested in the constructible derived category of
sheaves on this stack, we do not need the full machinery of Artin
stacks, and could consider instead the equivariant
derived category of $E_{\Bv,\Bw}$ as in the book of Bernstein and Lunts \cite{BL} or as
described by the author and Williamson \cite{WWequ}.  We will always
consider this space as having the classical topology.

By convention, if $w_i=\al_i^\vee(\la)$ and $\xi=\la-\sum v_i\al_i$,
then $X^\la_\xi=X^\Bw_\Bv$ (if the difference is not in the positive
cone of the root lattice, then this is by definition empty), and $X^\la=\dot \sqcup_{\xi} X^\la_{\xi}$.

As mentioned in the introduction, our construction is inspired by the work of Li \cite{Li10} and
that of Zheng \cite{Zheng2008}.  Li defines a 2-category
built from
perverse sheaves on double framed quiver varieties. 
\begin{definition}
Li's 2-category is defined as follows:
\begin{itemize}
\item 0-morphisms are dimension vectors for the quiver $\Gamma$,
\item 1-morphisms between $\bd$ and $\bd'$ are objects of geometric
  origin in the
  localized derived category which Li denotes by
  $\mathscr{D}^-(E_{\Omega}(k^{\la},k^{\bd},k^{\bd'}))$, with product
  given by the convolution product of \cite[(16)]{Li10}.
\item 2-morphisms are morphisms in the category described above.
\end{itemize}
\end{definition}

For certain technical purposes, it is much more convenient for us to
use a different 2-category built using quantizations.   Let
\[\fM^\la_\xi=T^*E^\la_\xi/\!\!/_{\det}G_{\xi}=\xi^{-1}(0)^s/G_{\xi}\] be the Nakajima quiver
variety attached to $\la$ and $\xi$; this is a smooth, quasi-projective
variety which arises through geometric invariant theory as an open subset of the
cotangent bundle of $X^\la_\xi$.  See
\cite{Nak94,Nak98} for a more detailed discussion of the geometry of
these varieties.  
 
Any point in $T^*E^\la_\xi$ can be
thought of as a representation of the doubled quiver of $\Gamma$ (with
the framing maps also doubled).  The subset $\mu^{-1}(0)$ can be
thought of as parameterizing representations that descend to a certain
quotient of the doubled path algebra called the {\bf preprojective
  algebra}.  A particularly important result for us is a description
of the stable locus in terms of representation theory:
\begin{lemma}[\mbox{\cite[3.5]{Nak94}}]
  The subvariety $\mu^{-1}(0)^s$ is the subset whose associated
  preprojective representation has no non-trivial subrepresentation
  killed by all shadow maps.
\end{lemma}

Recall that a {\bf quantization} of the variety $\fM^\la_\xi$ as
defined in \cite{BK04a} or \cite[\S 3]{BLPWquant} is a sheaf $\A_\xi'$ of flat $\C[[h]]$-algebras with $\A_\xi'/h
\A_\xi'\cong \mathcal{O}_{\fM^\la_\xi}$ such the induced Poisson
structure on $\mathcal{O}_{\fM^\la_\xi}$ matches the standard
holomorphic symplectic structure on a quiver variety (induced from the
cotangent bundle $T^*E^\la_\xi$).  
For such a quantization, we let $\A_\xi=\A_\xi'[h^{-1}]$.

 % As is shown in \cite{BK04a}, such a quantization is not unique, but
 % rather is determined by its period, a power series in
 % $hH^2(\fM;\C[[h]])$.  One obvious choice of quantization is that
 % corresponding to the class $0$, the canonical quantization.  This is
 % {\bf not} the choice we take.  

One method of constructing such quantizations is {\bf quantum Hamiltonian
reduction}. This operation was introduced in an algebraic context by
Crawley-Boevey, Etingof and Ginzburg \cite{CBEG} (though in many
contexts it appeared even earlier), and in the geometric form of
interest to us in \cite[2.8(i)]{KR07}.  Throughout we'll follow the
conventions of \cite{BLPWquant} for Hamiltonian reduction of
quantizations and refer the reader to constructions there (even those
which have appeared in older papers) in the interest of consistency.  We let $\mathcal{R}'$ be the
sheaf of microlocal differential operators on $T^* E^\la_\xi$, that
is, the Rees algebra for the usual order filtration of the sheaf 
$\D_{E^\la_\xi}$ of differential operators sheafified over $T^*E^\la_\xi$.  This algebra is
naturally a quantization of $T^*E^\la_\xi$ (see \cite[\S
4.1]{BLPWquant}).  We let $\mathcal{R}=\mathcal{R}'[h^{-1}]$; this is
a sheaf on $T^*E^\la_\xi$ such that taking pushforward under
$\pi\colon T^*E^\la_\xi$ we obtain $\pi_*\mathcal{R}\cong
\D_{E^\la_\xi}((h))$.  In particular, 
\[\Gamma(T^*E^\la_\xi;
\mathcal{R})\cong  \Gamma(E^\la_\xi;
\D_{E^\la_\xi})((h)).\]
Differentiating the action of $G$ on $E^\la_\xi$ induces a Lie algebra
map from $\fg$ to vector fields on $E^\la_\xi$, and thus a
non-commutative moment map $m\colon U(\fg)\to \mathcal{R}$.  Let
$\mathcal{R}_{\mathcal{S}}',\mathcal{R}_{\mathcal{S}}$ be the pullback
of these sheaves to the stable locus $\mathcal{S}$. As in \cite[\S 3.4]{BLPWquant}, we let \[\mathcal{E}=\mathcal{R} /\mathcal{R}
  m(\fg) \qquad \mathcal{E}_{\mathcal{S}}=\mathcal{R} _{\mathcal{S}}/\mathcal{R}_{\mathcal{S}} m(\fg) \] and consider the endomorphism
sheaf $\mathcal{E}nd_{\mathcal{R}
    _{\mathcal{S}}}(\mathcal{E}_{\mathcal{S}})$, which is naturally
  supported on $\mu^{-1}(0)^s$.  Let $p\colon \mu^{-1}(0)^s\to
  \fM^\la_\xi$ be the quotient map.
\begin{definition}
  We let $\A_\xi:=p_*\mathcal{E}nd_{\mathcal{R}
    _{\mathcal{S}}}(\mathcal{E}_{\mathcal{S}})$, the pushforward sheaf
  on $\fM^\la_\xi$.
\end{definition}
We actually have that $\A_\xi=\A_\xi'[h^{-1}]$ for a quantization 
$\A_\xi'$ obtained from $\mathcal{R}'$ by a similar reduction
procedure as in \cite[\S 3.4]{BLPWquant}.  The quantizations of
$\fM^\la_\xi$ can be classified by a cohomological invariant called
its {\bf period}.  This is a class in $hH^2(\fM^\la_\xi;\C)[[h]]$ and any
such class can be realized by a quantization since
$H^2(\fM^\la_\xi;\C)=H^{1,1}(\fM^\la_\xi;\C)$.  

On $X^\Bw_\Bv$, we have a tautological vector bundle $\mathscr{V}_i$ whose fiber over
a representation is $V_i$, the  part of that representation at node $i$; let $\mathscr{L}_i=\det(\mathscr{V}_i)$.
By \cite[6.4]{BLPWquant}, the period of $\A_\xi'$ is
\[\nicefrac{1}{2}\sum_{i\in \Gamma} \Big(w_i+\sum_{j\to i}v_j-\sum_{i\to j}
v_j\Big) c_1(\mathscr{L}_i)h.\]  Note that this period depends on the
choice of orientation of $\Gamma$, but its class modulo $hH^2(\fM^\la_\xi;\Z)$
does not.  Also, this is not always an integral class; this is a
generalization of the fact that differential operators, thought of as
a quantization of a cotangent bundle, do not always have integral
period (as \cite[3.10]{BLPWquant} shows).  If, as suggested in the
introduction, we think of the quiver variety as the cotangent bundle
of a hypothetical space $Y$, this would be the algebra of untwisted differential
operators on $Y$, and the quantization with period 0 would be the
differential operators in the square root of the canonical bundle of $Y$.

The quiver varieties carry a natural $\C^*$-action inherited from the
action on $T^*E_{\Bv,\Bw}$ scaling the cotangent fibers.  The sheaf of algebras
$\A_\xi$ carries a equivariant structure over $\C^*$ (see
\cite[2.3.3]{Losq}). We let $\A_\xi \mmod$
denote the category of $\C^*$-equivariant good modules over $\A_\xi$
(as defined in \cite[\S 4]{BLPWquant}).
 
The Hamiltonian reduction realization of $\A_\xi$ gives us a functor $\red:\D_{X^\la_\xi}\mmod\to
\A_\xi \mmod$ (called the ``Kirwan functor'' in \cite[\S
5.4]{BLPWquant}, where this functor is studied extensively) from
D-modules on $X^\la_\xi$ to $\A_\xi$-modules.
This functor proceeds by replacing a
D-module $\cM$ by its microlocalization $\bmu\cM:=\mathcal{R}\otimes_{\pi^{-1}\D_{X^\la_\xi}}\pi^{-1}\cM$, which is a sheaf on
$T^*X^\la_\xi\cong \mu^{-1}(0)/G$, and then restricting to $\fM^\la_\mu\subset
T^*X^\la_\xi$.  That is:
\begin{definition}
  The {\bf Kirwan functor} is the restriction $\displaystyle\red(\cM)=\cM|_{\fM^\la_\xi}$.
\end{definition}
While using stack language is elegant, 
one can also describe this in terms of the associated $G_\xi$-equivariant D-module $\cM'$ on
$E^\la_\xi$; this has microlocalization $\bmu\cM'$ supported on
$\mu^{-1}(0)$ by equivariance.  We restrict this to the stable locus, and take the invariant
pushforward  $\red(\cM)=p_*\mathcal{H}om_{\mathcal{R}_{\mathcal{S}}
}(\mathcal{E}_{\mathcal{S}},\bmu\cM'|_{\mathcal{S}})$, with its
natural $\A_\xi$-action.  Since on
$\mu^{-1}(0)^s$, the $G_\xi$ action is free, we always stay within the
world of varieties.  

Perhaps the most important property for us is that:
\begin{proposition}[\mbox{\cite[5.17]{BLPWquant}, \cite[1.1]{MNmorse}}]
  The functor $\red$ admits left and right adjoints
  \[\red_!\colon \A_\xi \mmod\to \D_{X^\la_\xi}\mmod\qquad \red_*\colon
  \A_\xi \mmod\to \D_{X^\la_\xi}\mmod,\] such that
  $\red\circ \red_!\cong \red\circ \red_*\cong\operatorname{id}$.
\end{proposition}

\begin{definition}
  We let $\EuScript{Q}^\la$ be the 2-category where
  \begin{itemize}
  \item 0-morphisms are dimension vectors for the quiver $\Gamma$,
  \item 1-morphisms between $\bd$ and $\bd'$ given by the
    bounded-below derived category of complexes of modules over
    $\A_\xi\boxtimes\A_{\xi'}^{op}$.  Composition of 1-morphisms
    $\cH_1\colon \fM^\la_{\xi_1}\to \fM^\la_{\xi_2}$ and $\cH_2\colon
    \fM^\la_{\xi_2}\to \fM^\la_{\xi_3}$ is given by convolution
    \begin{equation}\label{convolution}
      \cH_1\star\cH_2:=(p_{13})_*(p_{12}^*\cH_1\overset{L}\otimes_{\A_{\xi_2}}
      p_{23}^*\cH_2)[-\dim(\fM^\la_{\xi_1}\times \fM^\la_{\xi_3})].
    \end{equation}
  \item 2-morphisms are morphisms in the category described above; we
    consider this as a graded category with the homological grading.
  \end{itemize}
\end{definition}

This 2-category receives a natural 2-functor from the analytic version
of Li's 2-category; the (classical
topology) derived category of $X^\la_\xi$ has a functor to the derived
category of $\D_{X^\la_\xi}$-modules given by the Riemann-Hilbert
correspondence, and the functor $\red$ kills the necessary
subcategories to induce a 2-functor from the localization.  In order
to confirm that this is a 2-functor, we would have  check that we could also
define convolution as in Li's category \cite[\S 4.8]{Li10} (though,
his definition is ``dual'' to ours, since he uses the left, rather
than right adjoint of reduction).

\section{Hecke correspondences and categorical actions}
\label{sec:hecke-corr-categ}

We let $X^\la_{\xi;\nu}$ denote the moduli stack of short exact
sequences (``Hecke correspondences'') where the subobject belongs in
$X^\la_\xi$, the total object in $X^\la_{\xi-\nu}$ and the quotient in
$X^0_{-\nu}$. 

This moduli stack is naturally equipped with
projections
\begin{equation*}
  \begin{tikzpicture}[scale =2]
    \node(a) at (0,1) {$X^\la_{\xi;\nu}$};
    \node (c)at (0,0) {$X^\la_{\xi-\nu}$}; 
    \node (b) at (-1,0) {$X^\la_{\xi}$};    
    \node (d) at (1,0) {$X^0_{-\nu}$};
\draw[->,thick] (a) -- (b) node[midway,above left]{$p_1$};
\draw[->,thick] (a) -- (c) node[midway,right]{$p_2$};
\draw[->,thick] (a) -- (d) node[midway,above right]{$p_3$};
  \end{tikzpicture}
\end{equation*}
which we can think of more abstractly as taking the subobject, total
object and quotient, respectively.

If $\la-\xi=\sum v_i'\al_i$ and $\nu=\sum v_i''\al_i$, then we can
also think of this space more concretely.  We let 
\[E^\la_{\xi;\nu}\cong \bigoplus_{i\to
  j}\Hom(\C^{v_i'},\C^{v_j'})\oplus \Hom(\C^{v_i''},\C^{v_j'})\oplus
\Hom(\C^{v_i''},\C^{v_j''})\oplus\bigoplus_i \Hom(\C^{v_i},\C^{w_i})\]
and let $P_{\xi;\nu}$ be the parabolic in $G_{\Bv}$ which preserves
$\C^{v_i'}$ inside of $\C^{v_i}= \C^{v_i'}\oplus \C^{v_i''}$. 
We can alternatively define $X^\la_{\xi;\nu}:=E^\la_{\xi;\nu}/P_{\xi;\nu}$.

The projection maps are also easily understood from this perspective.
\begin{itemize}
\item The map $\pi_1$ is induced by the map
  $E^\la_{\xi;\nu} \to E^\la_{\xi}$ restricting each map to the
  subspace $\C^{v_i'}$ over each node.  
\item The map $\pi_2$ is induced by
  the inclusion $E^\la_{\xi;\nu}\hookrightarrow E^\la_{\xi-\nu}$
  induced by the isomorphism $\C^{v_i}= \C^{v_i'}\oplus \C^{v_i''}$.
\item The
  map $\pi_3$ is induced by the map $E^\la_{\xi;\nu} \to E^0_{-\nu}$
  which projects to $\Hom(\C^{v_i''},\C^{v_j''})$ for each
  arrow $i\to j$.
\end{itemize}
These maps are compatible with group homomorphisms
from $P_{\xi;\nu}$ to $G_{\Bv'},G_{\Bv},G_{\Bv''}$, and thus induce
maps of the appropriate quotient stacks.

For each $\la,\xi, i$, we
let 
\begin{equation}
 \tilde{\mathscr{F}}_i=\omega_{X^\la_{\xi-\al_i}}
\otimes_{\mathcal{O}_{X^\la_{\xi-\al_i}}}{(p_1\times
   p_2)}_*\mathcal{O}_{X^\la_{\xi;\al_i}}\qquad
    \mathscr{F}_i=\red(\tilde{\mathscr{F}}_i)\label{Fdef}
  \end{equation}
  \begin{equation}
\tilde  {\mathscr{E}}_i=\omega_{X^\la_{\xi}}\otimes_{\mathcal{O}_{X^\la_{\xi}}}{(p_2\times
      p_1)}_*\mathcal{O}_{X^\la_{\xi;\al_i}} \qquad 
  \mathscr{E}_i=\red(\tilde{ \mathscr{E}}_i)\label{Edef}
\end{equation}
Naturally,
    $\mathscr{F}_i$ is a module over
    $\A_\xi\boxtimes\A_{\xi-\al_i}^{op}$ and $\mathscr{E}_i$ is a
    module over $\A_{\xi-\al_i}\boxtimes\A_{\xi}^{op}$. That is, by
    definition, these are
    1-morphisms in $\EuScript{Q}^\la$ between the appropriate
    dimension vectors.  They are the images under the Riemann-Hilbert
    correspondence of the similarly named objects in Li's development
    of the theory.  We now proceed to our principal result:

\begin{theorem}\label{2-functor}
  We have a 2-functor of graded categories $\cG_\la\colon\tU\to \EuScript{Q}^\la$ sending
  $\eE_i\mapsto \mathscr{E}_i$ and  $\eF_i\mapsto \mathscr{F}_i$.
\end{theorem}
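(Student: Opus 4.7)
My plan is to invoke the reduction theorem of Cautis and Lauda \cite[Th. 1.1]{CaLa}, which replaces verification of every relation in $\dU$ with a short list of structural conditions on a candidate assignment. I would define the 2-functor $\cG_\la$ only on generating data: on objects, a weight $\mu$ is sent to the corresponding dimension vector; on generating 1-morphisms, $\eE_i\mapsto \mathscr{E}_i$ and $\eF_i\mapsto \mathscr{F}_i$; and on the generating 2-morphisms---dots, crossings, cups, and caps---I would choose explicit bimodule homomorphisms coming from the geometry of the Hecke correspondences, calibrated so that their degrees match the prescribed gradings.

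For the 2-morphisms between words in the $\mathscr{F}_i$'s, I would take the dot to be multiplication by the first Chern class of the tautological quotient line bundle along the Hecke correspondence, and take the crossing from the natural map comparing the iterated Hecke correspondence with the two-step flag-type correspondence. The resulting action of the quiver Hecke algebra $R$ on $\End(\mathscr{F}_{\Bi})$ is then precisely the one constructed by Varagnolo and Vasserot in \cite{VV}, so those relations come for free. The cups and caps arise from the units and counits of biadjunctions between $\mathscr{E}_i$ and $\mathscr{F}_i$; the requisite biadjunction reduces to a Grothendieck duality computation on the proper projections from $X^\la_{\mu;\al_i}$, where the twists by $\omega_{X^\la_{\mu-\al_i}}$ and $\omega_{X^\la_{\mu}}$ built into the definitions of $\widetilde{\mathscr{F}}_i$ and $\widetilde{\mathscr{E}}_i$ are arranged precisely so that push-forward on one side is right adjoint to pull-back on the other. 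Applying the Kirwan functor $\red$, which carries adjoint pairs to adjoint pairs, this descends to the desired biadjunction in $\EuScript{Q}^\la$.

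It then remains to verify the further Cautis--Lauda conditions: the matching of biadjunction degrees (immediate from a dimension count on $X^\la_{\mu;\al_i}$) and the key $\mathfrak{sl}_2$-type relation expressing $\mathscr{E}_i\star\mathscr{F}_j$ in terms of $\mathscr{F}_j\star\mathscr{E}_i$ plus shifted copies of the identity 1-morphism determined by the weight. For $i\neq j$ this reduces to comparing two transverse fiber products and is essentially formal; for $i=j$ one must decompose the self-fiber product of $X^\la_{\mu;\al_i}$ over $X^\la_{\mu-\al_i}$ into a diagonal stratum and a transverse open stratum, yielding the exact triangle that categorifies the Chevalley commutator. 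Both statements are established at the level of perverse sheaves and D-modules by Zheng \cite{Zheng2008} and Li \cite{Li10,Li10b}; I would transport their arguments to our setting by applying $\red$, using the identities $\red\circ\red_!\cong\id$ and $\red\circ\red_*\cong\id$ from \cite{BLPWquant} to lift the relevant exact triangles from $\D_{X^\la_\mu}\mmod$ to $\A_\mu\mmod$.

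The main obstacle, I expect, will be confirming that this categorical $\mathfrak{sl}_2$-triangle remains exact after the Kirwan reduction \emph{and} pairs correctly with the biadjunction units and counits to produce the split short exact sequences required by Cautis--Lauda; equivalently, that a particular composite involving a unit of adjunction is injective with the expected cokernel. This is the step where genuine geometric input rather than formal category theory is needed, and where the passage from the D-module setting of Zheng and Li to the quantization $\A_\mu$ demands the most care. The identification with Nakajima's geometric construction upon taking characteristic cycles then follows because the characteristic cycles of $\mathscr{E}_i$ and $\mathscr{F}_i$ are, by construction, the Lagrangian Hecke cycles appearing in \cite{Nak98}.
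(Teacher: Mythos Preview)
Your overall strategy matches the paper's: invoke Cautis--Lauda \cite[Th.~1.1]{CaLa}, source the KLR action from Varagnolo--Vasserot \cite{VV}, and transport the commutator isomorphisms from Li \cite{Li10b} through the Kirwan functor $\red$. Two corrections are worth making.

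First, you omit two of the Cautis--Lauda hypotheses the paper does check: integrability (condition (1), immediate since $\fM^\la_{\mu+k\al_i}\neq\varnothing$ for only finitely many $k$) and nonnegativity of the grading on $\End(\mathrm{id}_\mu)$ (condition (2), identified with the Hochschild cohomology of $\A_\mu$). These are easy, but they are part of the hypothesis list and must be stated.

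Second, you have mislocated the difficulty. Your anticipated obstacle---that the commutator triangle must ``pair correctly with the biadjunction units and counits to produce the split short exact sequences required by Cautis--Lauda''---is precisely what their theorem is designed to circumvent. Conditions (3) and (5) ask only that \emph{some} isomorphism
\[
\mathscr{F}_i\star\mathscr{E}_i\;\cong\;\mathscr{E}_i\star\mathscr{F}_i\ \oplus\ (\text{shifts of }\A_\mu)
\]
exist, with no compatibility with adjunction data demanded. The paper says this explicitly: ``Conditions (3) and (5) simply state that certain isomorphisms exist.'' There is no exact triangle to split by hand; Li already provides a direct-sum decomposition at the D-module level, and one only needs it to survive reduction.

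The genuine technical content---which you gesture at but do not resolve---is that $\red$ must commute with convolution well enough for Li's and Varagnolo--Vasserot's statements to descend. The identities $\red\circ\red_!\cong\id$ and $\red\circ\red_*\cong\id$ alone do not yield this. The paper proves a dedicated lemma that $\red(\tilde{\mathscr{F}}_{i_1}\star\cdots\star\tilde{\mathscr{F}}_{i_n})\cong \mathscr{F}_{i_1}\star\cdots\star\mathscr{F}_{i_n}$ by a microsupport argument (the Hecke correspondence sends unstable points to unstable points, so cones supported on the unstable locus stay there under convolution). For the $\mathfrak{sl}_2$ commutator it passes to the open locus where the outgoing maps at $i$ are injective and argues geometrically that stability of the two outer representations in a triple forces stability of the middle one, so that convolution before and after reduction agree. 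That is where the geometric input actually lives, not in any compatibility with adjunction.
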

For now, we postpone the proof of this theorem to Section \ref{proof}, and instead discuss
its variations and  consequences in a bit more detail.

\subsection{The non-integral case}
\label{sec:non-integral-case}

The existence of the objects $ \mathscr{E}_i$ and $\mathscr{F}_i$ depends very strongly on the fact that
we use untwisted D-modules here.  Consider twists
\[\chi_1=\sum_{j\in \Gamma} a_ic_1(\mathscr{L}_j)\in
H^2(X^\la_\xi)\qquad \chi_2=\sum_{j\in \Gamma} b_ic_1(\mathscr{L}_j')\in
H^2(X^\la_{\xi-\al_i})\] on the respective varieties, where we use
$\mathscr{L}_j',\mathscr{V}_j'$ to denote the tautological bundles on
$X^\la_{\xi-\al_i}$.

\begin{proposition}\label{twist-exist}
  There exists a line bundle $\mathscr{L}$ on $X^\la_{\xi;\al_i}$ such
  that 
 \[\tilde{\mathscr{F}}_i^{\mathscr{L}}=\omega_{X^\la_{\xi-\al_i}}
\otimes_{\mathcal{O}_{X^\la_{\xi-\al_i}}}{(p_1\times
   p_2)}_*\mathscr{L}
\]
\[ \tilde  {\mathscr{E}}_i^{\mathscr{L}}=\omega_{X^\la_{\xi}}\otimes_{\mathcal{O}_{X^\la_{\xi}}}{(p_2\times
      p_1)}_*\mathscr{L}^{-1}\]
are bimodules over $\D_{X^\la_{\xi}}(\chi_1)$ and
$\D_{X^\la_{\xi-\al_i}}(\chi_2)$ if and only if $a_i,b_i,a_j-b_j\in
\Z$ for all $j\in \Gamma$. 
\end{proposition}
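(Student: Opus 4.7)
The plan is to translate the bimodule condition into a Chern class identity on the Hecke correspondence and solve it explicitly.

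First I would describe the Picard group of $X^\la_{\mu;\al_i}$. Realizing it as a stack quotient of the incidence variety $\{(x,V) : x \in E^\la_{\mu-\al_i},\ V \subset \C^{v_i+1} \text{ a codimension-}1,\ x\text{-invariant subspace}\}$ by $G_{\mu-\al_i}$---equivalently, as a quotient by the parabolic $P \subset G_{\mu-\al_i}$ stabilizing such a hyperplane---one sees that $\mathrm{Pic}(X^\la_{\mu;\al_i})$ is freely generated (modulo the contractible framing direction) by the characters of $P$: the classes $p_1^*\mathscr{L}_j = p_2^*\mathscr{L}_j'$ for $j \neq i$, the subobject determinant $p_1^*\mathscr{L}_i$, and the line bundle $K$ whose fiber is the one-dimensional cokernel at vertex $i$. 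The tautological exact sequence $0 \to p_1^*\mathscr{V}_i \to p_2^*\mathscr{V}_i' \to K \to 0$ yields the key identity $p_2^*\mathscr{L}_i' = p_1^*\mathscr{L}_i \otimes K$, while $p_2^*\mathscr{L}_j' = p_1^*\mathscr{L}_j$ for $j \neq i$.

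Second, I would apply the general criterion that for a closed embedding $\iota\colon Z \hookrightarrow W$ of smooth Artin stacks, the dualizing-twisted direct image of a line bundle $\mathscr{L}$ on $Z$ carries a $\D_W(\chi)$-module structure if and only if $c_1(\mathscr{L}) = \iota^*\chi$ in $H^2(Z;\C)$. Applied to $(p_1,p_2)\colon X^\la_{\mu;\al_i} \hookrightarrow X^\la_\mu \times X^\la_{\mu-\al_i}$, the $\omega_{X^\la_{\mu-\al_i}}$ twist in the definition of $\tilde{\mathscr{F}}_i^{\mathscr{L}}$ converts the left action on the second factor into a right action of $\D(\chi_2)$, so the bimodule condition becomes
\[ c_1(\mathscr{L}) \;=\; p_1^*\chi_1 - p_2^*\chi_2 \qquad \text{in } H^2(X^\la_{\mu;\al_i};\C). \]
The analogous analysis of $\tilde{\mathscr{E}}_i^{\mathscr{L}}$ yields exactly the same equation (both sides negate when one passes from $\mathscr{L}$ to $\mathscr{L}^{-1}$ and swaps the roles of $\chi_1,\chi_2$), so no additional constraint arises.

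Third, I would solve this Chern class equation. Expanding the right-hand side via $p_2^*c_1(\mathscr{L}_i') = p_1^*c_1(\mathscr{L}_i) + c_1(K)$ gives
\[ p_1^*\chi_1 - p_2^*\chi_2 \;=\; \sum_{j}(a_j - b_j)\,p_1^*c_1(\mathscr{L}_j) \;-\; b_i\, c_1(K), \]
and writing a general candidate as $c_1(\mathscr{L}) = e\, c_1(K) + \sum_j c_j\,p_1^*c_1(\mathscr{L}_j)$ forces $c_j = a_j - b_j$ and $e = -b_i$. Thus $\mathscr{L} = K^{-b_i}\otimes \bigotimes_j (p_1^*\mathscr{L}_j)^{a_j - b_j}$ is the only candidate, and it is a genuine line bundle precisely when all the exponents are integers, i.e.\ $b_i \in \Z$ and $a_j - b_j \in \Z$ for every $j \in \Gamma$; combined with the $j=i$ instance this forces $a_i \in \Z$ as well, recovering exactly the stated condition.

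The principal technical hurdle is the criterion invoked in the second step: one must verify that the Hamiltonian-reduction presentation of $\D_{X^\la_\mu}(\chi_1)$ and $\D_{X^\la_{\mu-\al_i}}(\chi_2)$---parallel to the construction of $\A_\mu$ in Section~\ref{sec:quiver-varieties}---interacts with the natural equivariant structure on $(p_1\times p_2)_*\mathscr{L}$ so that the Chern class identity above is both necessary and sufficient for the twisted bimodule structure. Once this is in place, the remainder is the direct Picard-group computation above.
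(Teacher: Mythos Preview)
Your proposal is correct and follows essentially the same approach as the paper's proof: both identify the Picard group of $X^\la_{\mu;\al_i}$ with the character lattice of a maximal parabolic in $G_{\mu-\al_i}$, reduce the bimodule condition to the Chern class equation $c_1(\mathscr{L})=p_1^*\chi_1-p_2^*\chi_2$, and then read off the integrality constraints. The only cosmetic difference is the choice of basis---the paper uses $\{p_1^*c_1(\mathscr{L}_j)\}_{j\in\Gamma}\cup\{p_2^*c_1(\mathscr{L}_i')\}$ while you use $\{p_1^*c_1(\mathscr{L}_j)\}_{j\in\Gamma}\cup\{c_1(K)\}$; the relation $p_2^*\mathscr{L}_i'=p_1^*\mathscr{L}_i\otimes K$ makes these interchangeable, and the resulting integrality conditions ($a_i,b_i,a_j-b_j\in\Z$ versus $b_i,a_j-b_j\in\Z$ for all $j$) are visibly equivalent.
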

\begin{proof}
  First we note that $X^\la_{\xi},X^\la_{\xi-\al_i}$ and
  $X^\la_{\xi;\al_i}$ are each the quotient of an affine space by an
  affine algebraic group.  These groups are $G_\xi$ and
  $G_{\xi-\al_i}$ and a maximal parabolic in the latter, respectively.  Thus the Picard groups of these spaces are naturally
  identified with the character group of the group in question. In practice,
  this means that
  \begin{itemize}
  \item $\{c_1(\mathscr{L}_j)\}_{j\in \Gamma}$ is a basis of
    $H^2(X^\la_{\xi};\Z)$, 
\item $\{c_1(\mathscr{L}_j')\}_{j\in \Gamma}$ is
    a basis of $H^2(X^\la_{\xi-\al_i};\Z)$ and
\item
    $\{c_1(p_1^*\mathscr{L}_j)\}_{j\in \Gamma}\cup
    \{c_1(p_2^*\mathscr{L}_i')\}$ for $H^2(X^\la_{\xi;\al_i};\Z)$.
  \end{itemize}

In order to have the desired left and right twisted D-module structure, we must have that
  $c_1(\mathscr{L} )=p_1^*\chi_1-p_2^*\chi_2$; by the identification of the Picard group
  with homology, such an $\mathscr{L} $ exists if and only if $p_1^*\chi_1-p_2^*\chi_2\in
  H^2(X^\la_{\xi;\al_i};\Z)$.

  For $j\neq i$, we have that
  $p_1^*c_1(\mathscr{L}_j)=p_2^*c_1(\mathscr{L}_j')$, but for $i$,
  these are independent classes.
  Thus, we have that \[p_1^*\chi_1-p_2^*\chi_2=a_i
  p_1^*c_1(\mathscr{L}_i)-b_ip_2^*c_1(\mathscr{L}_i')+\sum_{j\neq i}
  (a_j-b_j)p_1^*c_1(\mathscr{L}_j),\] which is integral if and only if
  $a_i,b_i,a_j-b_j\in \Z$.
\end{proof}

Thus more generally, using Proposition \ref{twist-exist}, we can define
such an action where we choose any quantization corresponding to
differential operators in a line bundle on each $X^\la_\xi$, not just
the particular one we have fixed. 
If we instead choose a not necessarily integral twist $\chi=\sum
a_ic_1(\mathscr{L}_i)$, we only know at the moment how to construct a
categorical action of the smaller Lie algebra generated by the simple
root spaces where $a_i$ is integral.

This observation is particularly interesting in the case where
$\mathfrak{g}$ is affine, $\la$ is the basic fundamental weight and
$\xi=n\delta$.  In this case, the $\C^*$-invariant section algebra
$\Gamma(\fM^\la_\xi;\A_\xi)^{\C^*}$ is a spherical symplectic
reflection algebra for $S_n\wr \gamma$, where $\gamma$ matches $\fg$
under the Mackay correspondence by \cite{EGGO,Gorrem,Losq}. This
phenomenon of functors associated to roots appearing when particular
functions on the parameter space are integral is quite suggestive in
connection with Etingof's conjecture relating finite dimensional
modules for these symplectic reflection algebras to affine Lie
algebras \cite{EtiSRA}.  In fact, since the first version of this
paper appeared as a preprint, Bezrukavnikov and Losev \cite{BLet} have
proven this theorem using the functors that arise this way.

\subsection{Harish-Chandra and core modules}
\label{sec:harish-chandra-core}

The 2-functor $\cG_\la$ actually lands in a much smaller subcategory of
$\EuScript{Q}^\la$. In the product $\fM^\la_\xi\times
\fM^\la_{\xi'}$ we still have a notion
of ``diagonal.'' By \cite[3.27]{Nak98}, the affinization of a quiver
variety $\fN^\la_\xi$ lies in the moduli space of
semi-simple representations of the pre-projective algebra of a given
dimension.  We say a pair of such representations lies in the {\bf stable diagonal}
if they become isomorphic after the addition of trivial
representations (that is, they are isomorphic up to stabilization). 
In more concrete terms, the global functions on $\fN^\la_\xi$ are
generated by trace of the composition of the maps along a path in the
doubled Crawley-Boevey quiver\footnote{The Crawley-Boevey quiver
is $\Gamma$ with an additional vertex $\infty$ and $w_i$ new edges
attaching $i$ to $\infty$. We can think of an element of $E^\la_\xi$
as a representation of this quiver with $\C$ placed on $\infty$, and
thinking of each row in the matrix of the map $\C^{v_i}\to \C^{w_i}$
as the map along a different edge.} of $\Gamma$.  We can define the stable
diagonal to be the pairs in $\fN^\la_\xi\times
\fN^\la_{\xi'}$ where these traces agree for any path.

Following Nakajima, we let $Z$ denote the preimage of the stable
diagonal in $\fM^\la\times \fM^\la$; this can also be thought of as
the points where all the traces of loops coincide.  In \cite[\S 6.1]{BLPWquant}, Braden, Proudfoot and the
author define a 2-subcategory $\mathbf{HC}^g(\la)$ of good sheaves of
$\A_\xi\boxtimes \A_{\xi'}$-modules called {\bf Harish-Chandra
  bimodules}.  This is the category of modules $\cM$ such that:
\begin{itemize}
\item the support of $\cM$ is contained in $Z$.
\item there is a $\A'_\xi\boxtimes \A'_{\xi'}$-lattice $\cM'\subset
  \cM$ such that any global function vanishing on the stable diagonal
  kills the coherent sheaf $\cM'/h\cM'$.  This is a condition which
  should be thought of as an analogue of regularity of D-modules.
\end{itemize}

\begin{proposition}
 The image of  $\cG_\la$ lies in the 2-category $\mathbf{HC}^g(\la)$. 
\end{proposition}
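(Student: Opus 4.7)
The plan is to reduce the proposition to checking the Harish-Chandra property on generators and then invoking closure. Since $\cG_\la$ is a 2-functor and every 1-morphism of $\dU$ arises from the $\eE_i$ and $\eF_i$ by iterated composition, direct sum, grading shift, and (in the idempotent completion) passage to direct summands, it suffices to establish: (a) $\mathscr{E}_i,\mathscr{F}_i\in\mathbf{HC}^g(\la)$ for all $i$, and (b) $\mathbf{HC}^g(\la)$ is stable in $\EuScript{Q}^\la$ under the convolution product $\star$, since closure under shift, direct sum, and direct summand is immediate from the definitions in \cite[\S 6.1--2]{BLPWquant}.

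For (a), I would unwind the moduli meaning of the Hecke correspondence. A closed point of $X^\la_{\mu;\al_i}$ is a short exact sequence $0\to M'\to M\to S\to 0$ with $S\in X^0_{-\al_i}$; such an $S$ has trivial framing and dimension vector $\al_i$, so it is precisely one of the simple representations of the preprojective algebra used as a ``trivial'' summand in Nakajima's stable-isomorphism relation. Passing to semisimplifications, $M$ and $M'$ become stably isomorphic, so the corresponding image in $\fM^\la_\mu\times\fM^\la_{\mu-\al_i}$ lies in the subvariety $Z$. Since $\tilde{\mathscr{F}}_i$ is, by construction, set-theoretically supported on the image of the Hecke correspondence, its reduction $\mathscr{F}_i=\red(\tilde{\mathscr{F}}_i)$ is supported on $Z$. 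For the lattice condition, the canonical $\D^{\leq 0}$-filtration on $\tilde{\mathscr{F}}_i$ has associated graded a quotient of the coherent pushforward of $\mathcal{O}_{X^\la_{\mu;\al_i}}$, which is scheme-theoretically supported on the Hecke correspondence and hence annihilated by any function vanishing on $Z$; the reduction functor $\red$ transports this to a good $\A'$-lattice on $\mathscr{F}_i$ with the required property. The case of $\mathscr{E}_i$ is symmetric.

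For (b), I would invoke the general theory of Harish-Chandra bimodules over symplectic resolutions from \cite[\S 6.1--2]{BLPWquant}. The set-theoretic support behaves correctly because stable isomorphism is transitive and compatible with extensions by trivial representations, so the set-theoretic composition of two $Z$-supported correspondences is again $Z$-supported. The lattice condition is preserved under formula~\eqref{convolution} by a standard spectral-sequence argument: the derived tensor product and derived pushforward of good Harish-Chandra lattices remain good, and their reductions modulo $h$ are coherent and killed by functions vanishing on the composite stable diagonal. I expect this lattice-level step to be the main technical obstacle in a careful write-up, but it is treated in the cited material and can be quoted rather than reproved; the only genuinely new geometric input needed here is the moduli-theoretic observation that the Hecke correspondence maps into the stable diagonal.
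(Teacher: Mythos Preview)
Your proposal is correct and follows essentially the same strategy as the paper: reduce to the generators $\mathscr{E}_i,\mathscr{F}_i$ using closure of $\mathbf{HC}^g(\la)$ under convolution (which the paper also simply cites from \cite{BLPWquant}), verify the support condition via the moduli interpretation of the Hecke correspondence, and verify the lattice condition by passing to a filtration whose associated graded is a coherent sheaf supported on that correspondence. The only expository difference is that the paper obtains the required lattice by invoking regularity of the pushforward D-modules $\tilde{\mathscr{E}}_i,\tilde{\mathscr{F}}_i$ (hence a very good filtration) and then notes that global functions on $\fM^\la_\mu\times\fM^\la_{\mu\pm\al_i}$ are exactly the $G$-invariant functions upstairs, so vanishing on the stable diagonal forces vanishing on the support of the associated graded; your ``canonical $\D^{\le 0}$-filtration'' plays the same role but the regularity phrasing is a bit cleaner.
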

\begin{proof}
Since $\mathbf{HC}^g(\la)$ is closed under convolution, we need only
check these conditions for $\mathscr{E}_i$ and $\mathscr{F}_i$.  We
have already checked that the supports of these modules are Hecke
correspondences, and thus lie in $Z$.  

  Furthermore, the D-modules $\tilde{\mathscr{E}}_i$ and
  $\tilde{\mathscr{F}}_i$ are the pushforwards of regular D-modules,
  and thus themselves regular. The corresponding very good filtrations
  on these D-modules have associated graded killed by any global
  function which vanishes on their support.  Note that global
  functions on $\fM^\la_\xi \times \fM^\la_{\xi\pm \al_i}$ are the same as
  invariant functions on $E^\la_\xi\times E^\la_{\xi\pm \al_i}$. Since any invariant
  function whose reduction vanishes on the stable diagonal must vanish
  on 
  the support of $\tilde{\mathscr{E}}_i$ and
  $\tilde{\mathscr{F}}_i$, it acts trivially on their associated
  graded.  Thus, it also acts trivially on the induced lattice on
  $\mathscr{E}_i$ or $\mathscr{F}_i$, and we are done.
\end{proof}

This draws an analogy between the categorical action $\cG_\la$ and the action
of the monoidal category of Harish-Chandra bimodules (in the classical
sense) on various categories of representations of $\fg$.  The latter
is a categorification of the Hecke algebra, which has
\begin{itemize}
\item its original representation-theoretic description, 
\item a geometric
  one via the localization theorem of Beilinson and Bernstein
  \cite{BB}, and 
\item a diagrammatic description in the guise of Soergel bimodules
  given by the work of Elias and Khovanov in type A \cite{ElKh} and
  work of Elias and Williamson in general \cite{ElW}.
\end{itemize}
The 2-category $\tU$ was first
defined in a purely diagrammatic manner, so it is striking evidence of
its naturality (at least to the author) to see it arise in a geometric
context as well.

This observation also has applications in practice.  Consider a system of subvarieties $J_\xi\subset
\fM^\la_\xi$
which is closed under convolution with $Z$.  Since the support of the
convolution of two modules is contained in the convolution of their supports, the
category of modules supported on $J_\xi$ is closed under this
categorical action.  Examples include:
\begin{itemize}
\item the {\bf cores} $L^\la_\xi$ of the varieties $\fM^\la_\xi$; that is, the
  subvariety of representations which are nilpotent as representations
  of the preprojective algebra.  Alternatively, the core $L^\la_\xi $
  is the preimage of the unique fixed point of the conic $\C^*$-action
  on $\fN^\la_\xi$.  

A sheaf of $\A_\xi$-modules is
  called a {\bf core module} if it is 
  supported on the core.  Let $\mathcal{C}^\la_\xi$ be the category of
  core modules on $\fM^\la_\xi$ for our fixed quantization $\A_\xi$,
  and $\mathcal{C}^\la:=\oplus_\xi
  \mathcal{C}^\la_\xi$.
\item the points attracted to the core under a $\C^*$-action for which
  the symplectic form has positive weight.
  The modules supported on these subvarieties (subject to a regularity
  condition like $\mathbf{HC}^g$) are an analogue of category $\cO$
  and are studied in much greater detail by Braden, Licata,
  Proudfoot and the author in \cite{BLPWgco}.
\end{itemize}

Thus, we have that:
\begin{corollary}
  The sum $\mathcal{C}^\la$ carries a categorical $\fg$-action.\qed
\end{corollary}
In fact, we can prove something stronger here.  Recent work of
Baranovsky and Ginzburg \cite{BaGi} shows that number of simples in $
\mathcal{C}^\la_\xi$ is less than or equal to the dimension of
cohomology group 
$H^{mid}(\fM^\la_\xi;\C)$. By work of Nakajima \cite{Nak98}, $\dim
H^{mid}(\fM^\la_\xi;\C)=\dim (V_\la)_\xi$, the weight multiplicity of $\xi$ in the simple
$\fg$-representation $V_\la$ with highest weight $\la$.  Let
$K(\mathcal{C}^\la)$ be the Grothendieck group of the abelian category
of $\C^*$-equivariant good core modules.
\begin{theorem}
  There is an isomorphism of $\fg$-representations $K(\mathcal{C}^\la)\cong V_\la$.
\end{theorem}
We should emphasize that this is only true in the case where the
quantization is integral (it corresponds to D-modules on an honest
line bundle).  There is always an injective map $K(\mathcal{C}^\la)\to
V_\la$ given by characteristic cycles (see \cite[6.2]{BLPWquant} or
\cite{KSdq}), but outside of the integral case, it seems to never to
be surjective.
Recent work of
Bezrukavnikov and Losev \cite{BLet} has calculated the structure of
this Grothendieck group in non-integral cases for finite type, and
certain especially important affine cases.  

\begin{proof}
  The space $K(
  \mathcal{C}^\la)$ is an $\fg$-representation, which has a weight
  decomposition by the definition of a categorical
  $\fg$-action.  Furthermore, by the result from \cite{BaGi}
  referenced above, the weight multiplicities of this representation
  are no more than those of the simple $V_\la$. 

  On the other hand $\A_\la\cong\C$, thought of as a sheaf on a
  point.  Thus, $ \mathcal{C}^\la_\la$ is equivalent to the category
  of $\C$-vector spaces, so $K(
  \mathcal{C}^\la)$ has weight multiplicity $1$ for $\la$.  By our
  bound by $\dim (V_\la)_\xi$, the weight $\la$ is maximal among weights with non-zero
  multiplicity, since any higher weight corresponds to an empty quiver
  variety. Thus all vectors of weight $\la$ in $K(
  \mathcal{C}^\la)$ are highest weight
  vectors.  

  This shows that $V_\la$ occurs as a composition factor with
  multiplicity 1, and by the
  bound on weight multiplicity, there can be no others.
\end{proof}
One of the powerful aspects of categorical actions is that they
constrain the structure of a category.  In particular,
categorifications of simple representations are essentially unique, by
work of Rouquier \cite{Rou2KM}.
Attached to the weights $\la$ and $\xi$, there is an algebra
$R^\la_\xi$, the {\bf cyclotomic KLR algebra}, such that the
projective modules over this algebra categorify the simple
representation $V_\la$.  By this uniqueness result, we have the following:
\begin{theorem}\label{core-cyclotomic}
There is a semi-simple core module $C_\xi$ for each $\xi$ such that
$\Ext^\bullet(C_\xi,C_\xi)\cong R^\la_\xi$. 
\end{theorem}
In fact in \cite{Webqui}, we will show that the
cohomology of $\Ext^\bullet(C_\xi,C_\xi)$ is formal, so Morita
theory for dg-categories will imply that
$\Ext^\bullet(C_\xi,-)$ induces an equivalence of dg-categories
$R^\la_\xi\operatorname{-dgmod}\cong D^b(
  \mathcal{C}^\la_\xi)$.  
\begin{proof}
  The desired module is the sum \[C_\xi=\bigoplus_{\Bi} \eF_{i_1}\cdots \eF_{i_n}\A^\la_\la\] for
  $\Bi=(i_1,\dots,i_n)$ the set of all sequences such that
  $\xi+\al_{i_1}+\cdots+\al_{i_n}=\la$.  This is
  the Hamiltonian reduction of the D-module $\oplus_{\Bi}
  \tilde{\mathscr{F}}_{i_1}\star\cdots\star
  \tilde{\mathscr{F}}_{i_n}\star \cO^\la_\la$ (where $\cO^\la_\la$ is
  the structure sheaf of the point $E^\la_\la$).  This D-module is
  semi-simple by the Decomposition Theorem, so the same is true of its
  reduction.  

By \cite[2.38]{Webmerged}, we have an isomorphism of the deformed
cyclotomic quotient $\check{R}^\la_\xi\otimes_{\check{R}^\la_\la}
\Ext^\bullet (C_\la,C_\la)\to \Ext^\bullet(C_\xi,C_\xi)$.  Since
$C_\la=\A^\la_\la$, this tensor product is just $R^\la_\xi$, and we
have the desired isomorphism $\Ext^\bullet(C_\xi)\cong R^\la_\xi$.

Thus the non-isomorphic simple summands of $C_\xi$ are in bijection with
indecomposable projectives $R^\la_\xi$.
We know from \cite[2.29]{Webmerged} that the K-group of $R^\la_\xi$
has dimension given by the weight multiplicity of $\xi$ in $V_\la$, so
this is the number of non-isomorphic simple summands of $C_\xi$.  By the upper bound of
  Baranovsky and Ginzburg, every core simple must be a summand of this module.
%This also shows that the dg-algebra structure on $\Ext^\bullet(C_\xi)$
%is formal, so the equivalence $T^\la_\xi\operatorname{-dgmod}\cong D^b(
 % \mathcal{C}^\la)$ follows from standard Morita theory for
 % compactly generated dg-categories.
\end{proof}
Note that this equivalence matches the indecomposable projective
modules over $R^\la_\xi$ to the simple modules in
$\mathcal{C}^\la_\xi$. Since the main result of \cite{VV} matches
these indecomposables with Lusztig's canonical basis, we have that:
\begin{corollary}
  The isomorphism $K^0(\mathcal{C}^\la_\xi)\cong V_\la$ matches the
  classes of simples and Lusztig's canonical basis.  \qed
\end{corollary}
In particular, this shows that cyclotomic KLR algebras and Lusztig's
canonical basis for a simple have a natural
geometric origin based on quiver varieties.
Core modules may not seem like a familiar object to most readers, but
they are closely linked to finite dimensional modules over the section
algebra $A_\xi=\Gamma(\fM^\la_\xi;\A_\xi)^{\C^*}$.  It follows from
\cite[Th. B.1]{BLPWquant} that:
\begin{theorem}
  Every core module $\cM$ has a finite dimensional space of $\C^*$-invariant
  sections $\Gamma(\fM^\la_\xi;\cM)^{\C^*}$.  Furthermore, there
  exist choices of integral period such that
  $\Gamma(\fM^\la_\xi;-)^{\C^*}$ is an equivalence of categories
  between core modules and finite dimensional $A_\xi$-modules. \qed
\end{theorem}

Even when this sections functor fails to be an equivalence, it is
often a {\it derived} equivalence; the set of such periods actually
contains a Zariski open set.  The paper \cite{BLPWquant} contains a
much more detailed discussion of when localization and derived
localization hold.

While it does not follow from such a simple uniqueness argument, one
can generalize Theorem \ref{core-cyclotomic} to one connecting
category $\cO$'s to the {\bf weighted KLR
  algebras} introduced in \cite{WebwKLR}. This is proven in
in \cite[Th. A]{Webqui}.

\subsection{Canonical bases}
\label{sec:canonical-bases}

In this subsection, we assume that $\Gamma$ is an ADE Dynkin diagram.  

Another canonical basis worth considering is that for the modified
quantum universal enveloping algebra $\dot{\bf{U}}$.  By
\cite[Th. A]{WebCB}, this canonical basis coincides with the classes
of the indecomposable 1-morphisms in $\tU$.  

\begin{lemma}
  The 2-functor $\mathcal{G}_\la$ is full on 2-morphisms, that is for
  any 1-morphisms  $u$ and $v$, the map $\Hom_{\tU}(u,v)\twoheadrightarrow
  \Hom_{\EuScript{Q}_\la}(
 \mathcal{G}_\la( u), \mathcal{G}_\la (v))$.  
\end{lemma}
\begin{proof}
We induct downward on the usual order on
the weight lattice generated by $\mu-\al_i<\mu$.
  
We
    have that $\fM^\la_\la$ is a
    point, so the only non-trivial 1-morphism is the identity, and its
    endomorphisms are just the scalars.  In this case, fullness is
    clear.  This establishes the base case.
 
    Assume that we know the theorem for 1-morphisms $\mu'\to \nu'$
    where either $\mu'>\mu$ or $\nu'>\nu$.  Assume that $u$ and $v$
    are indecomposable. 
Recall that $\tU$ has a ``triangular decomposition'' into two
subcategories $\tU^+$ and $\tU^-$ generated by the $\eE_i$'s and
$\eF_i$'s respectively.
 We now prove two smaller claims:
    \begin{enumerate}
    \item if $v$ is not in the image of $\tU^-$, then
      $\Hom_{\tU}(u,v)\twoheadrightarrow \Hom_{\EuScript{Q}_\la}(\mathcal{G}_\la
      u,\mathcal{G}_\la v)$.
    \item if $u$ is not in the image of $\tU^+$, then
      $\Hom_{\tU}(u,v)\twoheadrightarrow \Hom_{\EuScript{Q}_\la}(\mathcal{G}_\la
      u,\mathcal{G}_\la v)$.
    \end{enumerate}
    Let us first consider (1).  If $v$ is not in the image of $\tU^-$
    then by \cite[5.12]{WebCB}, we have that $v$ is a summand of
    $\eE_iv'$ for some 1-morphism $\mu+\al_i\to \nu$; let
    $e\colon \eE_iv'\to \eE_iv'$ by an idempotent whose image is $v$,
    and $v''$ be the image of $1-e$, that is the complementary
    summand.  By assumption, we have a surjection
    \[\Hom_{\tU}(u,\eE_iv')\cong
    \Hom_{\tU}(\eF_iu,v')\twoheadrightarrow
    \Hom_{\EuScript{Q}_\la}(\mathcal{G}_\la (\eF_i u),\mathcal{G}_\la(v'))\cong
    \Hom_{\EuScript{Q}_\la}(\mathcal{G}_\la (u),\mathcal{G}_\la( \eE_i v')).\]
    With we compose this map with the idempotent $\mathcal{G}_\la e$, then we
    obtain a surjection
    $\Hom_{\tU}(u,\eE_iv')\twoheadrightarrow
    \Hom_{\EuScript{Q}_\la}(\mathcal{G}_\la (u),\mathcal{G}_\la (v))$,
    which kills $\Hom_{\tU}(u,v'')$; thus, the induced map
    $\Hom_{\tU}(u,v )\to 
    \Hom_{\EuScript{Q}_\la}(\mathcal{G}_\la (u),\mathcal{G}_\la (v))$
    is surjective as desired.  Claim (2) follows by a symmetric
    argument.

    Thus, it remains to establish
    $\Hom_{\tU}(u,v)\twoheadrightarrow \Hom_{\EuScript{Q}_\la}(\mathcal{G}_\la 
    u,\mathcal{G}_\la v)$
    for $u$ in the image of $\tU^-$ and $v$ in the image of $\tU^+$.
    For reasons of weight, the target can only be non-zero if $\mu=\nu$ and
    $u=v=\id_\mu$.  Thus, we must prove that
    \begin{equation}
    \Hom_{\tU}(\id_\mu,\id_\mu)\twoheadrightarrow
    \Hom_{\EuScript{Q}_\la}(\id_\mu,\id_\mu)\cong
    H^*(\fM^\la_\mu).\label{eq:Kirwan}
  \end{equation}
This surjectivity is a consequence of the ``algebraic Kirwan
surjectivity'' discussed in \cite{Webcenter}.  Combining the
surjectivity \cite[\ref{center-prop:Kirwan}]{Webcenter} to the center
of the cyclotomic quotient and the
isomorphism \cite[\ref{center-thm:final}]{Webcenter} of said center to
the cohomology of the quiver variety, the map of \eqref{eq:Kirwan}
must be surjective.
  \end{proof}
  By a standard argument (see, for example,
  \cite[Lemma \ref{compare-lem:full-indec}]{Webcomparison}), this shows that the
  functor $\mathcal{G}_\la$ sends each indecomposable 1-morphism to an
  indecomposable bimodule.

In fact, we can strengthen this statement:

\begin{lemma}\label{simples}
  For every simple $\D_{X^\la_\xi}^{\mbox{}}\boxtimes
\D_{X^\la_{\xi'}}^{op}$-module $L$, the reduction $\red(L)$ is simple,
and every simple object in  the heart of $\EuScript{Q}$  is a
reduction of such a simple module.
\end{lemma}
\begin{proof}
The functor $\red$ is exact; thus for any simple $K$ in $\EuScript{Q}$,
all but one composition factor of $\red_!(K)$ must be killed by
$\red$.  Thus, $K$ is the reduction of that composition factor $L$.  On the other
hand, if $L$ is simple and $\red(L)\neq 0$, then we have a non-zero map
$K\to \red(L)$ for some $K$, and thus a map $\red_!(K)\to L$.  Thus,
$L$ must be the unique composition factor of $\red_!(K)$ not killed by
$\red$, and so $\red(L)=K$ and is thus simple.
\end{proof}

Fix a vertex $i$ (which we assume to be a source).  It will frequently
be useful be useful to consider a variety intermediate to imposing all
stability conditions and none of them:
\begin{definition}\label{def:hat}
  We let $\hat{X}^\la_\xi$ be the open locus in $X^\la_\xi$ where the
  sum \[x_{out}\colon V_i\to W_i\oplus \bigoplus_{\alpha(e)= i}V_{\omega(e)}\]  of the maps along edges pointing out from $i$ is
  injective.  We let $\hat{X}^\la_{\xi\pm\al_i;\al_i}$ be the
  restriction of the correspondence ${X}^\la_{\xi\pm\al_i;\al_i}$ to
  the same locus.
\end{definition}

\begin{lemma}\label{semisimple}
  The object $\cG_\la(P)$ for $P$ a 1-morphism in $\tU$ is isomorphic
  to $\red_!(M)$ for $M$ a sum of shifts of simple regular holonomic
  D-modules and is thus a sum of shifts of simple $\A$-modules. 
 If $\tilde{\psi}(P)=P$, then the D-module $M$ can be taken to be self-dual.
\end{lemma}
We should note that this is an analogue of Conjecture 4.13 in
\cite{Li10} in our situation.

\begin{proof}
  In order to show both of these statements, we need only show that
  for any sequence $(\Bi)$ (including both positive and negative
  simple roots), the complex $\cG_\la(\Bi)$ is the reduction of a self-dual sum of
  shifts of regular holonomic D-modules. We induct on the length of
  $\Bi$.

First, we note that the identity 1-morphism, the sheaf $\A_\xi$ with the
diagonal bimodule structure, is simple since it has irreducible
support and the diagonal bimodule over the Weyl algebra is simple.
It is the analogue of this point which is actually quite difficult
in Li's category, and thus is an obstruction to using the techniques
described here in that situation.

 We have the
   diagram of maps 
   \begin{equation}
\tikz[very thick,baseline]{\node (a) at (0,-2) {$X^\la_\xi$}; 
   \node (b) at (0,0) {$\hat{X}^\la_\xi$};   
   \node (la) at (-5,-2) {$X^\la_{\xi-\al_i}$};
   \node (lb) at (-5,0) {$\hat{X}^\la_{\xi-\al_i}$};
   \node (mlb) at (-2.5,1.5) {$\hat{X}^\la_{\xi-\al_i;\al_i}$};
  \node (ra) at (5,-2) {$X^\la_{\xi+\al_i}$};
  \node (rb) at (5,0) {$\hat{X}^\la_{\xi+\al_i}$};
   \node (mrb) at (2.5,1.5) {$\hat{X}^\la_{\xi;\al_i}$};
\draw [->] (b) -- (a) node[midway,right]{$\iota$};
\draw [->] (mlb) --(b) node[above, midway]{$f_2$};
\draw [->] (mlb) --(lb) node[above, midway]{$f_1$};
\draw [->] (mrb) --(b) node[above, midway]{$e_1$};
\draw [->] (mrb) --(rb) node[above, midway]{$e_2$};
\draw [->] (lb) -- (la) node[midway,right]{$\iota_-$};
\draw [->] (rb) -- (ra) node[midway,right]{$\iota_+$};
   }\label{diagram}
 \end{equation}
We note that each one of these maps is smooth or an open inclusion.
We let \[f_1^{!*}:=f_1^*[\dim \hat{X}^\la_{\xi-\al_i;\al_i}- \dim
X^\la_{\xi-\al_i}]=f^!_1 [\dim X^\la_{\xi-\al_i}-\dim
\hat{X}^\la_{\xi-\al_i;\al_i} ]\] denote the unique shift of the
pullback functor with commutes with Verdier duality, and similarly for
$f_2^{!*},e_1^{!*},e_2^{!*}$.
We can
  consider a simple D-module $L$
  on $X^\la_\xi$.  Rewriting convolution with  $\hat{\mathscr{F}_{i}},
    \hat{\mathscr{E}_{i}}$ in terms of the diagram \eqref{diagram}, we
    have that \[\red(L\star
    \hat{\mathscr{E}_{i}})=\red((\iota_+)_*(e_2)_*e_1^{!*}\iota^*L)\qquad \red(L\star
    \hat{\mathscr{F}_{i}})=\red((\iota_-)_*(f_1)_*f_2^{!*}\iota^*L).\]
Furthermore, applying 
Lemma \ref{lem:hat-convolve}, 
  we have that
  \begin{equation}
\red(L)\star \mathscr{E}_i=\red((\iota_+)_*(e_2)_*e_1^{!*}\iota^*L)\qquad
\red(L)\star
\mathscr{F}_i=\red((\iota_-)_*(f_1)_*f_2^{!*}\iota^*L).\label{red-conv}
\end{equation}

By the Decomposition Theorem, the D-modules $(e_2)_*e_1^{!*}\iota^*L$ and
$(f_1)_*f_2^{!*}\iota^*L$ are a sum of shifts of simple D-modules on
  $\hat{X}^\la_{\xi\pm \al_i}$; furthermore, since all D-modules
  supported on $X^\la_{\xi\pm \al_i}\setminus \hat{X}^\la_{\xi\pm
    \al_i}$ are killed by $\red$, we can replace $(\iota_{\pm})_*$
  with the intermediate extension $(\iota_{\pm})_{!*}$ in \eqref{red-conv}:
\[\red(L)\star \mathscr{E}_i=\red((\iota_+)_{!*}(e_2)_*e_1^{!*}\iota^*L)\qquad
\red(L)\star \mathscr{F}_i=\red((\iota_-)_{*!}(f_1)_*f_2^{!*}\iota^*L).\]
Since intermediate extension preserves simplicity, we see that
$\red(L)\star \mathscr{E}_i$ is a reduction of sum of shifts of simple $\A_{\xi\pm
    \al_i}$-modules by Lemma \ref{simples}.
By the inductive assumption, $\cG_\la(\Bi)=\red(M)$ for $M$ a self-dual sum of shifts of
regular holonomic D-modules.  Thus $\cG_\la(\Bi,\pm i)$ is also a
reduction of  a self-dual sum of shifts of
regular holonomic D-modules (either
$(\iota_-)_{*!}(f_1)_*f_2^{!*}\iota^*M$ or
$(\iota_+)_{!*}(e_2)_*e_1^{!*}\iota^*M$) since the operations
$(\iota_-)_{*!}, \iota^*, (f_1)_*,f_2^{!*},(e_2)_*$ and $e_1^{!*}$ all
commute with duality (since $f_i,e_i$ are proper and smooth).
\end{proof}
Combining Lemma \ref{semisimple} with the observation that
indecomposability is preserved under this map, we
see that:
\begin{corollary}
  For an indecomposable 1-morphism $P$ in $\tU$, the sheaf
  $\cG_\la(P)$ is simple.
\end{corollary}
Let $\mathcal{Q}_\la$ denote the image of $\cG_\la$ as a functor
between graded additive categories, where the grading on the former
arises from the homological grading; this is a full
2-subcategory of $\EuScript{Q}_\la$, which is closed under convolution
(but {\it not} under extensions).  
This is a mixed humorous category in the sense of \cite[1.11]{WebCB}
by applying \cite[1.20]{WebCB} with $\EuScript{J}$ given by  the dg-subcategory $\cQ_\la$
generates equipped with the usual $t$-structure.   
Thus, if we let the {\bf canonical basis} of
$K_q(\mathcal{Q}_\la)$ be the classes of the simple modules, then
\cite[1.15]{WebCB} implies that these are also canonical bases in the
algebraic sense of bar-invariant almost-orthogonal vectors.  

Convolution also endows the graded Grothendieck group $K_q(\mathcal{Q}_\la)$ with an algebra, with
an induced algebra map $K_q(\mathcal{G}_\la)\colon K_q(\tU)\to
K_q(\mathcal{Q}_\la)$.  Finally, \cite[1.17]{WebCB} shows that:
\begin{proposition}
  Each canonical basis vector in $K_q(\mathcal{Q}_\la)$ is the
  image of a unique canonical basis vector in $\dot{\bf U}_q\cong K_q(\tU)$,
  and any other canonical basis vector in $\dot{\bf U}_q\cong
  K_q(\tU)$ in killed by $K_q(\mathcal{G}_\la)$.  
\end{proposition}

\subsection{Decategorification}
\label{sec:decategorification}

Finally, we turn to understanding how this action decategorifies.  As defined in \cite[\S 6.2]{BLPWquant}, based on work of Kashiwara and
Schapira \cite{KSdq}, we have a map
$\operatorname{CC}$ from the $K$-group of sheaves supported on $Z$ to  $H^{B\! M}_{top}(Z)$ which
intertwines convolution of sheaves with convolution of Borel-Moore
classes.  Composing the map induced on Grothendieck groups defined by
$\cG_\la$ with $\operatorname{CC}$, we obtain a homomorphism
 $C:K(\tU)\to H^{B\! M}_{top}(Z)$.   
\begin{proposition}
We have a commutative diagram 
\begin{equation}\label{triangle}
\tikz[->,very thick]{
\node (a) at (-3,0) {$K(\tU)$};
\node (b) at (3,0) {$H^{B\! M}_{top}(Z)$};
\node (c) at (0,1.5) {$\dot{\bf U}(\fg)$};
\draw (a) -- (b) node[below,midway]{$C$};
\draw (a) -- (c) node[above,midway,sloped]{$\sim$};
\draw (c) -- (b) node[above right ,midway]{$N$};
}
\end{equation}

  where 
  $N\colon \dot{\bf U}(\fg)\to H^{B\! M}_{top}(Z)$ is the map defined by Nakajima in \cite{Nak98}.
\end{proposition}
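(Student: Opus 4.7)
My plan is to reduce the commutativity of the diagram to a statement about the generators $\mathscr{E}_i$ and $\mathscr{F}_i$, and then verify it by a direct geometric computation of characteristic cycles. The key structural observation is that both paths around the triangle are algebra homomorphisms (once we split into weight components), so agreement on a generating set implies agreement everywhere.

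More precisely, the characteristic cycle map $\operatorname{CC}$ intertwines convolution of sheaves with convolution of Borel--Moore classes by the setup of \cite[\S 6.2]{BLPWquant} (following Kashiwara--Schapira \cite{KSdq}), and Nakajima's map $N\colon \dot U\to H^{B\!M}_{top}(Z)$ is a homomorphism of algebras by construction \cite{Nak98}. Under the isomorphism $K(\dU)\cong \dot U$ from \cite[1.7-9]{WebCTP}, the classes of the 1-morphisms $\eE_i\mathbf{1}_\la$ and $\eF_i\mathbf{1}_\la$ correspond to the Chevalley generators $E_i\mathbf{1}_\la$ and $F_i\mathbf{1}_\la$. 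Since these generate $\dot U$ (together with the idempotents, which are matched tautologically by the 0-morphism labeling), it suffices to check that $C([\eE_i\mathbf{1}_\la])=N(E_i\mathbf{1}_\la)$ and $C([\eF_i\mathbf{1}_\la])=N(F_i\mathbf{1}_\la)$.

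By the definition of $\cG_\la$ given in Theorem \ref{2-functor}, $K(\cG_\la)$ sends $[\eF_i]$ to $[\mathscr{F}_i]=[\red(\tilde{\mathscr{F}}_i)]$, and similarly for $\eE_i$. The substantive step is therefore the computation of $\operatorname{CC}(\mathscr{F}_i)$ and $\operatorname{CC}(\mathscr{E}_i)$. Unwinding the definitions, $\tilde{\mathscr{F}}_i$ is the right D-module pushforward (using the $\omega$-twist) along the proper map $p_1\times p_2$ of the structure sheaf of the smooth stack $X^\la_{\mu;\al_i}$; its characteristic cycle is the fundamental class of the conormal variety to its support. Applying $\red$ on the $\A$-module side corresponds on the characteristic cycle side to pullback to $\mu^{-1}(0)^s$ followed by descent to the GIT quotient, and one checks that this produces exactly the fundamental class of the symplectic Hecke correspondence inside $\fM^\la_{\mu-\al_i}\times\fM^\la_\mu$. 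This is precisely the cycle Nakajima uses to define $N(F_i)$.

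The main obstacle I expect is matching normalizations. The $\omega$-twists in the definitions of $\tilde{\mathscr{E}}_i$ and $\tilde{\mathscr{F}}_i$ were inserted so that these are bimodules over the untwisted sheaves of differential operators on $X^\la_\mu$, and the period of $\A_\mu$ was fixed in Section \ref{sec:quiver-varieties} to match that choice via the Kirwan functor $\red$. With these conventions aligned, $\operatorname{CC}(\mathscr{F}_i)$ is the honest fundamental class of the Hecke correspondence, with no auxiliary line bundle twist and with the correct homological shift. Verifying this at the level of cycles (not merely supports) is the genuinely geometric step; once it is established, multiplicativity of both $\operatorname{CC}$ and $N$ closes the diagram automatically, and the proposition follows.
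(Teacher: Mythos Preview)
Your proposal is correct and follows essentially the same route as the paper: reduce to generators via multiplicativity of $\operatorname{CC}$ and $N$, then compute $\operatorname{CC}(\mathscr{E}_i)$ and $\operatorname{CC}(\mathscr{F}_i)$ as the Hecke correspondences $\mathfrak{P}_i$ and $\omega(\mathfrak{P}_i)$. The paper's concrete content for your ``genuinely geometric step'' is that $p_1\times p_2$ is injective with smooth image (locally a partial-flag-to-Grassmannian map), so the pushforward has irreducible characteristic variety with multiplicity one; the uniqueness argument is then closed by citing \cite[9.4]{Nak98}.
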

\begin{proof}
We can fix a vertex $i$, and assume that we have chosen our
orientation so that $i$ is a source.  Let $\hat{X}^\la_\xi$ and
$\hat{X}^\la_{\xi\pm\al_i;\al_i}$ be as defined in \ref{def:hat}.  We can define
  a bimodule $\mathscr{\hat{E}}_i$ restricting $\mathscr{\tilde{E}}_i$ to the hatted
  varieties.   We still have a reduction functor $\hat{\red}$ on D-modules over
  $\hat{X}^\la_\xi$, since any point where $x_{out}$ is not injective
  is destabilized by a subrepresentation on $i$ given by its kernel,
  and $\hat{\red}(\mathscr{\hat{E}}_i)=\mathscr{E}_i$.    

The map $\operatorname{CC}$ sends $[\mathscr{E}_i]$ to the sum of the 
fundamental classes of the components of its support variety, weighted by the generic
dimension of the stalk of its classical limit at a generic point of
the component.  However, the map $\hat{X}^\la_{\xi;\al_i}\to
\hat{X}^\la_\xi\times \hat{X}^\la_{\xi-\al_i}$ is injective with  smooth image; it is
locally modeled on the map from the $(k,k+1)$-type partial flag
variety to the 2 Grassmannians (with the ambient space given by the
sum $W_i\oplus \bigoplus_{\alpha(e)= i}V_{\omega(e)}$).  Thus, its pushforward has irreducible
characteristic variety with multiplicity one.  

The intersection of this characteristic variety with the stable locus
is the support variety of $\mathscr{E}_i$, which we can thus identify
with the Hecke
correspondence denoted $\mathfrak{P}_i$ in \cite{Nak98}. By a
symmetric argument,
the support variety of $\mathscr{F}_i$ is the variety obtained from this one by reversing
factors. 
  Thus, we have that \[ [\eE_i]\mapsto [\mathscr{E}_i]\mapsto
  [\mathfrak{P}_i]\qquad [\eF_i]\mapsto [\mathscr{F}_i]\mapsto
  [\omega(\mathfrak{P}_i)].\]
By \cite[9.4]{Nak98}, the homomorphism  $N\colon\dot U\to
H^{B\! M}_{top}(Z)$ is the unique one with this property.
\end{proof}

Similarly, in \cite[6.5.4]{KSdq}, it's shown than the action of $K(\tU)$ on
$K(\mathcal{C}^\la)$ by convolution is intertwined by $\operatorname{CC}$ with the Nakajima's
action of $U(\fg)$ on top Borel-Moore homology of the core $L^\la:=\sqcup_\xi L^\la_\xi$.  That is:
\begin{corollary}
  The diagram \eqref{triangle} can be extended to a commutative
  diagram including the natural actions of $K(\tU)$ on
  $K(\mathcal{C}^\la)$ induced by $\mathcal{G}_\la$, Nakajima's action
  of $H^{BM}_{top}(Z)$ on $H^{BM}_{top}(L^\la)$, and the usual action
  of $U(\fg)$ on $V_\la$:
  \[\tikz{\matrix[row sep=11mm,column sep=17mm,ampersand
    replacement=\&]{
      \node (a) {$K(\tU)$}; \& \node (c) {$H^{BM}_{top}(Z)$}; \&  \node (e) {$\dot{U}(\fg)$}; \\
      \node (b) {$K(\cC^\la)$};\& \node (d) {$H^{BM}_{top}(L^\la)$}; \&  \node (f) {$V_\la$};\\
    }; \draw[very thick,->] (a)--node[above,midway]{$C$} (c);
    \draw[very thick,->] (b)--node[above,midway]{$\operatorname{CC}$}
    (d); \draw[very thick,->] (e)--node[above,midway]{$N$} (c);
    \draw[very thick,->] (f)--node[above,midway]{$\sim$} (d);
    \draw[very thick,->] (b.70) to[out=70,in=0] (a.south) to[out=180,
    in=110] (b.110); \draw[very thick,->] (d.70) to[out=70,in=0]
    (c.south) to[out=180, in=110] (d.110); \draw[very thick,->] (f.70)
    to[out=70,in=0] (e.south) to[out=180, in=110] (f.110);}
  \hfill \qed \]
\end{corollary}

\section{The proof of Theorem~\ref{2-functor}}
\label{proof}

Now we proceed to the proof of Theorem~\ref{2-functor} through a
series of lemmata.
Let $m^\la_\mu=\dim \fM^\la_\mu$

\begin{lemma}\label{lem:biadjoint}
  The left and right  adjoint of 
    $\mathscr{F}_{i}\star-$ are the convolution functors \[\mathscr{E}_{i}[\nicefrac{1}{2}(m^\la_\mu -m^\la_{\mu-\al_i} )
  ] \star-\qquad \text{ and }\qquad \mathscr{E}_{i}[\nicefrac{1}{2}(m^\la_{\mu-\al_i}  -m^\la_\mu )
  ] \star-.\]
\end{lemma}
\begin{proof}
  As in \cite[(2.3.18)]{KSdq}, we can take the dual
  \[\mathbb{D}\mathscr{E}_{i}\cong \mathbb{R}\mathcal{H}om_{\A_{\mu}\boxtimes\A_{\mu-\al_i}^{op} }(\mathscr{E}_{i},\A_{\mu}\boxtimes\A_{\mu-\al_i}^{op}).\]
  This a left $\A_{\mu-\al_i}$-module and right $\A_\mu$-module
  By \cite[2.3.15]{KSdq}, its is the shift of a
  $\A_{\mu-\al_i}\boxtimes \A_\mu$ module with Lagrangian support
  equal to $\omega(\mathfrak{P}_i)]$.  Thus, we must have
  $\mathbb{D}\mathscr{E}_{i}[-\nicefrac{1}{2}(m^\la_\mu+m^\la_{\mu-\al_i})]\cong \mathscr{F}_{i}$; this is also easily
  shown using local computations with D-modules.  By
  \cite[6.2.4]{KSdq}, this homological shift is just convolution with
  the square root of the dualizing sheaf for $\A_{\mu-\al_i}\boxtimes \A_{\mu}$.
  We denote the dualizing sheaf for $\A_\mu$ by $\omega_{\mu}$. Similarly, $\mathbb{D}\mathscr{F}_{i}[-\nicefrac{1}{2}(m^\la_\mu+m^\la_{\mu-\al_i})]\cong \mathscr{E}_{i}$.
For any $\A_\mu$-module $\cM$ and
\nc{\cN}{\mathcal{N}}
$\A_{\mu-\al_i}$-module $\cN$, we have that
\begin{align*}
\mathbb{R}\mathcal{H}om_{\A_\mu}(\cM,\mathscr{F}_{i}\star\cN)&\cong
\mathbb{D}\cM\star (\mathscr{F}_{i}\star\cN)\\
&\cong
(\mathbb{D}\cM\star \mathscr{F}_{i})\star\cN & \text{\cite[3.2.4]{KSdq}}\\
&\cong (\mathbb{D}\cM\star \omega_{\mu-\al_i}^{\nicefrac{1}{2}}\star
  \mathbb{D}\mathscr{E}_{i}\star \omega_{\mu}^{\nicefrac{1}{2}})\star\cN \\
 &\cong \mathbb{D}(\mathscr{E}_{i}\star
  \cM)\star \cN[\nicefrac{1}{2}(m^\la_{\mu-\al_i} -m^\la_\mu)
  ] & \text{\cite[3.3.6]{KSdq}}\\
&\cong
  \mathbb{R}\mathcal{H}om_{\A_\mu}(\mathscr{E}_{i}\star\cM[\nicefrac{1}{2}(m^\la_\mu -m^\la_{\mu-\al_i} )
  ] ,\cN).
\end{align*}
By
  symmetry, this gives the biadjunction. 
\end{proof}

\begin{lemma} \label{lem:conv-adj}
  We have a natural isomorphism $M \star \red N\cong \red(\red_*M\star N)$ for any complex of
  $\A_\xi\boxtimes \A_{\xi'}^{op}$-modules $M$ and any complex of 
  $\D_{X^\la_{\xi'}}\boxtimes \D_{X^\la_{\xi''}}^{op}$-modules $N$.
\end{lemma}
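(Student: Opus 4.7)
The plan is to leverage the identity $\red\circ\red_*\cong\id$ in order to rewrite the left-hand side as $\red$ applied to a D-module convolution, and then to check that $\red$ intertwines the two convolutions. First, since $\red_*$ is the right adjoint to $\red$ and the counit $\red\red_*\to\id$ is an isomorphism, there is a canonical identification $M\cong \red(\red_* M)$. Substituting into the left-hand side gives $M\star \red N\cong \red(\red_* M)\star \red N$, so the whole claim reduces to a monoidality property: for any complex $A$ of D-modules on $X^\la_\mu\times X^\la_{\mu'}$ and any complex $B$ on $X^\la_{\mu'}\times X^\la_{\mu''}$, there is a natural isomorphism $\red(A)\star \red(B)\cong \red(A\star B)$, with the inner $\star$ denoting the analogous convolution on the $X^\la$-side (defined by precisely the same formula \eqref{convolution} with $\fM$'s replaced by $X$'s).

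To establish this monoidality, I would unpack the definition \eqref{convolution} and commute $\red$ past each of its three ingredients: pullback along the factor-projections $p_{12}, p_{23}$; derived tensor product over the structure sheaf of the triple product; and pushforward along $p_{13}$. The pullbacks are the easy cases, since both the triple and pairwise products of quiver varieties are obtained by a single Hamiltonian reduction of the corresponding products of $T^*E^\la$ by $G_\mu\times G_{\mu'}\times G_{\mu''}$, and pullback along a projection that merely forgets an already-reduced factor is compatible with reduction in the remaining factors by a straightforward Kirwan-style argument. The derived tensor product is similarly compatible, being essentially a local operation on the quantized structure sheaves.

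The main obstacle is compatibility of $\red$ with the pushforward $(p_{13})_*$. My approach would be to use the description of $\red$ as a derived tensor product $\red(-)\simeq \A\otimes^L_{\D^\chi_{X^\la}}(-)$ afforded by the Hamiltonian construction of \cite[2.8(i)]{KR07}, which converts the desired commutation into a derived projection formula for the smooth projection $p_{13}$ between the appropriate triple and pairwise products. This projection formula is proved in the quantized setting much as in the classical D-module case, the only subtlety being the passage through the stable locus $\mu^{-1}(0)^s$, which is handled uniformly for all three factors. Once monoidality of $\red$ is in hand, chaining together the steps yields $\red(\red_* M)\star \red N\cong \red(\red_* M\star N)$, which is exactly the asserted isomorphism.
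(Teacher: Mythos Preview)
Your reduction to a general monoidality statement $\red(A)\star\red(B)\cong\red(A\star B)$ is where the argument has a genuine gap. This isomorphism is far from formal: the convolution on the $\fM$-side involves a pushforward along the projection from $\fM^\la_\mu\times\fM^\la_{\mu'}\times\fM^\la_{\mu''}$, which only sees the \emph{stable} locus in the middle factor, whereas the D-module convolution pushes forward over all of $X^\la_{\mu'}$. Your sketch for the pushforward step (``projection formula\ldots the only subtlety being the passage through the stable locus, which is handled uniformly for all three factors'') glosses over exactly the point where these differ: the middle factor is not on the same footing as the outer two, since it is integrated out rather than merely restricted, and contributions microsupported on its unstable locus can survive the outer reduction. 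Indeed, the very next lemma in the paper is devoted to proving, for the specific sheaves $\tilde{\mathscr{F}}_i$, that reduction commutes with their convolution, and the argument there hinges on a nontrivial geometric property of Hecke correspondences (instability on one side forces instability on the other). If the monoidality of $\red$ you assert were available in general, that lemma would be a one-line consequence.

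The paper's proof takes a different and much shorter route. Both sides of the asserted isomorphism are derived functors in $M$ and in $N$, so it suffices to check on generators: $M=M_1\boxtimes M_2$ and $N=\D_{X^\la_{\mu'}}\boxtimes\D_{X^\la_{\mu''}}$. For this $N$ the convolution degenerates to a global-sections computation in the middle factor, and the two sides are identified directly via the adjunction
\[
\Gamma(\fM^\la_{\mu'};M_2)\cong\Hom_{\A_{\mu'}}(\A_{\mu'},M_2)\cong\Hom_{\D_{X^\la_{\mu'}}}(\D_{X^\la_{\mu'}},\red_*M_2)\cong\Gamma(X^\la_{\mu'};\red_*M_2).
\]
No compatibility of $\red$ with convolution is ever invoked.
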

\begin{proof}
  We need only confirm this isomorphism when $M=M_1\boxtimes
 M_2$ and $N=\D_{X^\la_{\xi'}}\boxtimes \D_{X^\la_{\xi''}}$.
  In this case, \[M \star \red N\cong M_1\boxtimes
  \Gamma(\fM^\la_{\xi'};M_2)\boxtimes
  \A_{\xi''} \qquad \red(\red_*M\star N)\cong M_1\boxtimes
 \Gamma(X^\la_{\xi'};\red_* M_2)\boxtimes
  \A_{\xi''} \]
The result then follows from the isomorphism
\[\Gamma(\fM^\la_{\xi'};M_2)\cong \Hom_{\A_{\xi'}
}(\A_{\xi'},M_1)\cong
\Hom_{\D_{X^\la_{\xi'}}}(\D_{X^\la_{\xi'}}, \red_*M_2)\cong \Gamma(X^\la_{\xi'};\red_* M_2).\qedhere\]
\end{proof}
\begin{lemma}\label{unstable-convolve}
  If $\cM$ is a $\D_{X^\la_{\xi}}$-module whose microsupport $\msupp(\cM)$ is contained in the unstable
  locus, then $\msupp(\cM\star
        \tilde{\mathscr{E}_{i}})$ is also contained in the unstable
        locus.   That is, if $\red(\cM)=0$, then $\red(\cM\star
        \tilde{\mathscr{E}_{i}})=0$.  
\end{lemma}
\begin{proof}
  Since the map $p_1\times p_2$ is a composition of a closed
inclusion and a smooth map, we can apply the description of the effect
of these maps on singular supports given in \cite[9a \& b]{BernD}.
Thus, the microsupport $\msupp(\tilde{\mathscr{E}}_{i})$ lies in the
image in $T^*X^\la_{\xi}\times T^*X^\la_{\xi-\al_i}$ of 
\[\big\{ x\in X^\la_{\xi;\al_i}, \varphi\in T^*_{(p_1(x),p_2(x))}(X^\la_{\xi}\times
X^\la_{\xi-\al_i}) \big\vert (p_1\times p_2)^*\varphi=0
\big\}.\] We can think of $x$
as a representation of the oriented quiver with chosen subrepresentation, the
covector $\varphi$ as a choice of maps along the oppositely oriented
arrows that extend the total representation and the subrepresentation
to representations of the preprojective algebra; in this case, the vanishing condition
is simply that the inclusion is a map of preprojective
representations. That is, the microsupport of
$\tilde{\mathscr{E}}_{i}$ is composed of the pairs of representations
of the preprojective algebra such that the LHS is isomorphic to a
subrepresentation of the RHS.
In particular, 
if the
lefthand point $p_1(x)$ has a destabilizing subrepresentation, 
$p_2(x)$ does as well.
That is, the microsupport of $\tilde{\mathscr{E}}_{i}$ has the property that if
        $p_1(x)$ is unstable, then $p_2(x)$ is as well. 

Applying the same result from \cite[9a \& b]{BernD}, the microsupport $\msupp(\cM\star
        \tilde{\mathscr{E}_{i}})$ is contained in the set 
\[\{(x,\varphi)\in T^*X^\la_{\xi-\al_i}\vert (x',\varphi';x,\varphi)\in
\msupp(\tilde{\mathscr{E}}_{i}) \text{ for some } (x',\varphi') \in
\msupp(\cM).\}\]  This shows that all points in this microsupport must
be unstable.
\end{proof}

 \begin{lemma}
    $\displaystyle \red(\tilde{\mathscr{E}_{i_1}}\star \cdots \star
    \tilde{\mathscr{E}}_{i_n})\cong \mathscr{E}_{i_1}\star \cdots \star
    \mathscr{E}_{i_n}$
  \end{lemma}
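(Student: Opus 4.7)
The plan is to induct on $n$; the case $n=1$ is the definition $\mathscr{F}_i=\red(\tilde{\mathscr{F}}_i)$. For the inductive step, write $\mu_k$ for the successive weights traversed by the composition, and set $\tilde{N} := \tilde{\mathscr{F}}_{i_2} \star \cdots \star \tilde{\mathscr{F}}_{i_n}$, so that by the inductive hypothesis $\mathscr{F}_{i_2} \star \cdots \star \mathscr{F}_{i_n} \cong \red(\tilde{N})$. Applying Lemma~\ref{lem:conv-adj} with $M=\mathscr{F}_{i_1}$ and $N=\tilde{N}$ yields
\[
\mathscr{F}_{i_1} \star \cdots \star \mathscr{F}_{i_n} \;\cong\; \mathscr{F}_{i_1} \star \red(\tilde{N}) \;\cong\; \red\bigl(\red_*\mathscr{F}_{i_1} \star \tilde{N}\bigr),
\]
and it remains to identify the right-hand side with $\red(\tilde{\mathscr{F}}_{i_1} \star \tilde{N})$.

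The unit of the adjunction $\red \dashv \red_*$ provides a canonical map $\eta\colon \tilde{\mathscr{F}}_{i_1} \to \red_*\mathscr{F}_{i_1}$; since $\red\circ\red_* \cong \id$, the mapping cone $C$ of $\eta$ is killed by $\red$, which geometrically means that the characteristic variety of $C$ avoids the ``bistable'' open subset $\fM^\la_{\mu_0} \times \fM^\la_{\mu_1}$ inside the cotangent product. It therefore suffices to show that $\red(C \star \tilde{N}) = 0$, i.e., that the characteristic variety of $C \star \tilde{N}$ similarly avoids $\fM^\la_{\mu_0} \times \fM^\la_{\mu_n}$. By the standard support estimate for convolutions, any preprojective-level point $(y_0, y_n)$ in this characteristic variety arises from a chain of preprojective subrepresentations $y_0 \subset y_1 \subset \cdots \subset y_n$ with $(y_0, y_1)$ in the characteristic variety of $C$. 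If both $y_0$ and $y_n$ were stable, then $y_1 \subset y_n$ would force $y_1$ stable (any shadow-killed subrepresentation of $y_1$ would be a shadow-killed subrepresentation of $y_n$, hence trivial), so $(y_0, y_1) \in \fM^\la_{\mu_0} \times \fM^\la_{\mu_1}$, contradicting its membership in $\mathrm{char}(C)$.

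The main obstacle I anticipate is the rigorous formulation of the characteristic-variety estimate under the iterated convolution, together with its interpretation in terms of chains of preprojective subrepresentations via the moduli-theoretic description of the Hecke correspondences and the standard functorial behavior of singular support under pullback, tensor, and proper pushforward of D-modules. Granted these compatibilities, the geometric input is nothing more than the elementary observation that subrepresentations of stable preprojective representations are themselves stable.
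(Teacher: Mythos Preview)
Your argument is correct and is essentially the paper's own proof: both induct on $n$, invoke Lemma~\ref{lem:conv-adj} to reduce to showing that a cone microsupported on the unstable locus remains so after convolution with a Hecke kernel, and then appeal to the elementary fact that subrepresentations of stable preprojective representations are stable. The only cosmetic difference is that the paper peels off the \emph{last} factor $\tilde{\mathscr{F}}_{i_n}$ and uses the counit map from $\red_!$, whereas you peel off the \emph{first} factor and use the unit of $\red_*$; the geometric input is identical in either direction.
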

  \begin{proof}  We induct on $n$; when $n=1$, this is true by
    definition.

By the inductive hypothesis,  $\displaystyle \red(\tilde{\mathscr{E}_{i_1}}\star \cdots \star
    \tilde{\mathscr{E}}_{i_{n-1}})\cong \mathscr{E}_{i_1}\star \cdots \star
    \mathscr{E}_{i_{n-1}}$. Thus, we have maps \[\red_!(\mathscr{E}_{i_1}\star \cdots \star
    \mathscr{E}_{i_{n-1}})\to \tilde{\mathscr{E}_{i_1}}\star \cdots \star
    \tilde{\mathscr{E}}_{i_{n-1}}\to \red_*(\mathscr{E}_{i_1}\star \cdots \star
    \mathscr{E}_{i_{n-1}})\] which induce isomorphisms after
    applying $\red$.
  
We have a map $a\colon \red_!(\mathscr{E}_{i_1}\star \cdots \star
    \mathscr{E}_{i_{n-1}})\to \tilde{\mathscr{E}_{i_1}}\star \cdots \star
    \tilde{\mathscr{E}}_{i_{n-1}}$
%\to \red_*(\mathscr{E}_{i_1}\star \cdots \star   \mathscr{E}_{i_{n-1}})$ 
which induces an isomorphism after
    applying $\red$.   Thus $C(a)$, the cone of this morphism, has
    cohomology microsupported on the unstable locus.  By definition,
    we have an exact triangle
    \begin{equation}
      \label{eq:1}
      \red_!(\mathscr{E}_{i_1}\star \cdots \star
    \mathscr{E}_{i_{n-1}}) \to \tilde{\mathscr{E}_{i_1}}\star \cdots \star
    \tilde{\mathscr{E}}_{i_{n-1}}\to C (a)\overset{[1]}{\to}
    \end{equation}

Thus, applying the triangulated functor $-\star
        \tilde{\mathscr{E}_{i_n}}$ to the equation \eqref{eq:1}, we have an exact triangle
        \begin{equation*}
\red_!(\mathscr{E}_{i_1}\star \cdots \star
    \mathscr{E}_{i_{n-1}}) \star
        \tilde{\mathscr{E}_{i_n}}\to \tilde{\mathscr{E}_{i_1}}\star \cdots \star
    \tilde{\mathscr{E}}_{i_{n}}\to C (a) \star
        \tilde{\mathscr{E}_{i_n}}\overset{[1]}{\to}.
      \end{equation*}

By Lemma \ref{unstable-convolve}, we have
\begin{math}
\red(C (a) \star
        \tilde{\mathscr{E}_{i_n}})=0
      \end{math}, so applying $\red$ to this exact triangle shows that 
      \begin{equation}
       \label{eq:2}
        \red( \red_!(\mathscr{E}_{i_1}\star \cdots \star
    \mathscr{E}_{i_{n-1}}) \star \tilde{\mathscr{E}}_{i_n})\cong \red(\tilde{\mathscr{E}_{i_1}}\star \cdots \star
    \tilde{\mathscr{E}}_{i_{n}})
      \end{equation}

 If we apply Lemma \ref{lem:conv-adj} with
    $M= \mathscr{E}_{i_1}\star \cdots \star
    \mathscr{E}_{i_{n-1}}$ and $N=\tilde{\mathscr{E}}_{i_n}$, we
    arrive at
    \begin{equation}
\mathscr{E}_{i_1}\star \cdots \star
    \mathscr{E}_{i_n}\cong \red( \red_!(\mathscr{E}_{i_1}\star \cdots \star
    \mathscr{E}_{i_{n-1}}) \star \tilde{\mathscr{E}}_{i_n}).\label{eq:3}
  \end{equation}

Combining equations (\ref{eq:2}--\ref{eq:3}), we arrive at the desired
        isomorphism 
\[\mathscr{E}_{i_1}\star \cdots \star
    \mathscr{E}_{i_n}\cong \red( \red_!(\mathscr{E}_{i_1}\star \cdots \star
    \mathscr{E}_{i_{n-1}}) \star \tilde{\mathscr{E}}_{i_n})\cong \red(\tilde{\mathscr{E}_{i_1}}\star \cdots \star
    \tilde{\mathscr{E}}_{i_{n}}).\qedhere\]
\end{proof}

\begin{lemma} \label{lem:hat-convolve} For any $\D_{X^\la_\mu}$-module $\cM$, we have that:
  \begin{equation}\label{ii-red}
    \red\cM\star
    \mathscr{F}_{i}\cong \red(\cM\star
    \hat{\mathscr{F}_{i}})\qquad    \red\cM\star
    \mathscr{E}_{i}\cong \red(\cM\star
    \hat{\mathscr{E}_{i}})
  \end{equation}
\end{lemma}
\begin{proof}
  We let
  $\hat{O}^\la_{\xi}=T^*\hat{X}^\la_{\xi}\setminus \fM^\la_{\xi}$
  and
  \[\hat{I}\colon \fM^\la_{\xi}\times\fM ^\la_{\xi+\al_i} \to
  T^*X^\la_{\xi}\times \fM^\la_{\xi-\al_i}\qquad \hat{J}\colon
  O^\la_{\xi}\times\fM ^\la_{\xi+\al_i} \to T^*\hat{X}^\la_{\xi}\times
  \fM^\la_{\xi-\al_i}\]
  be the inclusion of the loci where the the first coordinate is
  (un)stable. By definition,
  \[\red\cM\star
  \mathscr{F}_{i}\cong (p_2)_*\hat{I}_*\hat{I}^*
  (p_{1}^*\bmu\cM\otimes \bmu\hat{\mathscr{F}}_{i})\qquad
  \red(\cM\star \hat{\mathscr{F}_{i}})\cong (p_2)_*
  (p_{1}^*\bmu\cM\otimes \bmu\hat{\mathscr{F}}_{i})\]
  Thus, by the usual recollement, these will be isomorphic via the
  natural map if and only if
  $(p_2)_*\hat{J}_!\hat{J}^! (p_{1}^*\bmu\cM\otimes
  \bmu\hat{\mathscr{F}}_{i})=0$.
  Thus, it suffices to show that
  $\hat{J}^! (p_{1}^*\bmu\cM\otimes \bmu\hat{\mathscr{F}}_{i})=0$.
  Any point which lies in its support must have the following
  properties: its two coordinates correspond to framed modules
  $S_1,S_2$ over the preprojective algebra with
  an inclusion $S_2\hookrightarrow S_1$, such that $S_2$ is stable,
  and $S_1$ has a destabilizing subrepresentation $Z_1\subset S_1$.
  Furthermore, this destabilizing subrepresentation cannot lie solely
  on the vertex $i$.  However, the cokernel $S_1/S_2$ is only
  supported on $i$, so $Z_1$ cannot inject into this quotient.  The
  intersection $Z_1\cap S_2$ must thus be non-trivial, providing a
  destabilizing subrepresentation of $S_2$.  Thus, we have arrived at
  a contradiction, and this sheaf must have empty support, and thus be
  0. The second equation follows by a similar argument
  $  T^*X^\la_{\xi}\times \fM^\la_{\xi+\al_i}$, or alternately, from
  Lemma \ref{unstable-convolve}.
\end{proof}

\begin{proof}[Proof of Theorem~\ref{2-functor}]
We wish to check the conditions of \cite[4.13]{RouQH}.  This is
  defined by a list of conditions, which we check in the same order.
  \begin{itemize}
%\renc{\labelenumi}{(\roman{enumi})}
  \item the functors $\mathscr{E}_{i}\star-$ and
    $\mathscr{F}_{i}\star-$ are biadjoint up shift.  This follows from
    Lemma \ref{lem:biadjoint}. 
\item the sheaves 
  $\oplus_{\Bi}\mathscr{E}_{i_1}\star \cdots \star
    \mathscr{E}_{i_n}$ carry an action of the KLR algebra for the
    polynomials $Q$ we have specified. The solution sheaf (i.e. the image under
  the Riemann-Hilbert correspondence) of $\tilde{\mathscr{E}_{i_1}}\star \cdots \star
    \tilde{\mathscr{E}}_{i_n}$ is precisely the perverse sheaf that
    Varagnolo and Vasserot denote by ${}^{\delta\!}\mathcal{L}_{\Bi}$ in
    \cite{VV}.

In our language, \cite[3.5]{VV} and \cite[5.7]{RouQH} (independently) show that the Ext algebra of
  solution sheaves of $\oplus_{\Bi} \tilde{\mathscr{E}_{i_1}}\star \cdots \star
    \tilde{\mathscr{E}}_{i_n}$ is given by the KLR
    algebra $R= \oplus R_\nu$; since the Riemann-Hilbert correspondence is an
    equivalence of categories, we arrive at an isomorphism
\[\Ext^\bullet\big(\oplus_{\Bi} \tilde{\mathscr{E}_{i_1}}\star \cdots \star
    \tilde{\mathscr{E}}_{i_n}\big) \cong  R.\]  It follows that the image of these sheaves under any functor, in
    particular $\red(\tilde{\mathscr{E}_{i_1}}\star \cdots \star
    \tilde{\mathscr{E}}_{i_n})\cong {\mathscr{E}_{i_1}}\star \cdots \star
    {\mathscr{E}}_{i_n}$, still
    carry this action.
\item The functors $\mathscr{E}_{i}\star-$ and $\mathscr{F}_{i}\star-$
  are locally nilpotent.  This follows from that fact that for fixed $\xi$,
    there are only finitely many integers such that
    $\fM^\la_{\xi+k\al_i}$ is non-empty.
  \end{itemize}
 
The final condition is that for each $i$, we have \begin{samepage}
  \begin{equation*}\subeqn\label{item:1}
  {\mathscr{F}_{i}}\star
  {\mathscr{E}_{i}}\cong {\mathscr{E}_{j}}\star {\mathscr{F}_{i}}
  \oplus (q^{\langle\al_i,\xi\rangle +d_i}+\cdots
  +q^{-\langle\al_i,\xi\rangle -d_i}) \cdot \A_\xi\quad \text{ if
  }\langle\al_i,\xi\rangle\leq 0 
\end{equation*}
\begin{equation*}\subeqn\label{item:2}
  {\mathscr{F}_{i}}\star
  {\mathscr{E}_{i}} \oplus (q^{-\langle\al_i,\xi\rangle +d_i}+\cdots
  +q^{\langle\al_i,\xi\rangle -d_i}) \cdot \A_\xi\cong
  {\mathscr{E}_{j}}\star {\mathscr{F}_{i}} \quad\text{ if
  }\langle\al_i,\xi\rangle\geq 0. 
\end{equation*}
\end{samepage}
Once this is proven, the result will follow.
This can be proven from calculations done
on the level of constructible sheaves carried through the
Riemann-Hilbert correspondence. 
That these isomorphisms exist is shown in the characteristic $p$ setting by
\cite[1.12-13]{Li10b} following similar proofs of Zheng \cite{Zheng2008}. Li's proofs use no special facts about
characteristic $p$ fields; in principle, we could simply cite his
work, but for the sake of completeness, we give arguments in the
deformation quantization setting for the same facts.  

 Applying Fourier transform as necessary, we can
  assume that $i$ is a source.  Proposition \ref{twist-exist} assures
  us that we can transition between the different quantizations that
  arise from different orientations; the categories of modules over
  the different quantizations that arise are given by tensor product
  with quantizations of line bundles to bimodules given in
  \cite[5.2]{BLPWquant}. It's easily seen that these intertwine the
  actions of $\tU$. %As defined earlier, we let $\hat{X}^\la_\xi$ be the open locus in
%  $X^\la_\xi$ where
%  the sum $x_{out}$ of the maps along edges pointing out from $i$ is
%  injective; and let $\hat{X}^\la_{\xi\pm\al_i;\al_i}$ be the restriction
 % of the appropriate correspondence to the same locus.   

We consider
  the diagram (\ref{diagram}) of maps again.
Recall that the
  maps $e_i$ and $f_i$ are both proper and smooth; their fibers are
  projective spaces.  We let $\hat{\mathscr{F}}_i$ denote the
  restriction of $\tilde{\mathscr{F}}_i$ to the injective locus, and
  similarly for $\hat{\mathscr{E}}_i$.

We have an isomorphism of $\hat{\mathscr{E}_{i}}\star
  \hat{\mathscr{F}_{i}}$ with the pushforward by
  $\mathbbm{f}_2:=f_2\times f_2$ of the structure sheaf
  of
  $\hat{X}^\la_{\xi;\al_i}\times_{\hat{X}^\la_{\xi-\al_i}}\hat {X}^\la_{\xi;\al_i}$
  tensored with the canonical sheaf of the right factor.  The sheaf $\hat{\mathscr{F}_{i}}\star
  \hat{\mathscr{E}_{i}}$ is derived in the same way from $\mathbbm{e}_1: =e_1\times e_1$.
%  $\hat{X}^\la_{\xi+\al_i;\al_i}\times_{\hat{X}^\la_{\xi+\al_i}}\hat{X}^\la_{\xi+\al_i;\al_i}$.

Now, we turn to showing that  
\newseq
\begin{equation*}\subeqn
\hat{\mathscr{F}_{i}}\star
  \hat{\mathscr{E}_{i}}\cong \hat{\mathscr{E}_{j}}\star
  \hat{\mathscr{F}_{i}} \oplus (q^{\langle\al_i,\xi\rangle
    +1}+\cdots +q^{-\langle\al_i,\xi\rangle
    -1}) \cdot \mathcal{D}_{ \Delta}\quad \text{ if }\langle\al_i,\xi\rangle\leq 0 \label{eq:4}
\end{equation*}
\begin{equation*}\subeqn
\hat{\mathscr{F}_{i}}\star
  \hat{\mathscr{E}_{i}} \oplus (q^{-\langle\al_i,\xi\rangle
    +1}+\cdots +q^{\langle\al_i,\xi\rangle
    -1}) \cdot \mathcal{D}_{ \Delta}\cong \hat{\mathscr{E}_{j}}\star
  \hat{\mathscr{F}_{i}} \quad\text{ if }\langle\al_i,\xi\rangle\geq
  0\label{eq:5}
\end{equation*}
where $\Delta$ denotes the diagonal in $ \hat{X}^\la_{\xi}\times
\hat{X}^\la_{\xi}$.
The analogous calculation for $\ell$-adic sheaves is done by
Li in \cite[1.13]{Li10b}; specifically, his equations
\cite[(19-20)]{Li10b} compute the two sides of the proceeding
displayed equations and show that they agree.

Since his proof is not especially difficult, let us give an account for the reader.  The maps \[\mathbbm{f}_2\colon
\hat{X}^\la_{\xi;\al_i}\times_{\hat{X}^\la_{\xi-\al_i}}\hat
{X}^\la_{\xi;\al_i}\to \hat{X}^\la_{\xi}\times\hat
{X}^\la_{\xi}\qquad \mathbbm{e}_1\colon
\hat{X}^\la_{\xi+\al_i;\al_i}\times_{\hat{X}^\la_{\xi+\al_i}}\hat
{X}^\la_{\xi+\al_i;\al_i}\to \hat{X}^\la_{\xi}\times\hat
{X}^\la_{\xi}\] both have image given by the set $H^\la_\xi$ of representations
(injective at $i$) which are the same away from $i$, and where the
subspaces at $i$ have intersection of codimension 1 in both spaces.
Both $\mathbbm{f}_2$ and $\mathbbm{e}_1$ induce an isomorphism on the
locus in $H^\la_\xi$ where the representations differ,
and a projective space bundle at the points where they coincide.  

The
map $\mathbbm{e}_1$ has fiber over the diagonal given by
$\mathbb{P}^{v_i-1}$, since the fiber consists of all the ways of
choosing a hyperplane in $V_i$.
 The
map $\mathbbm{f}_2$ has fiber $\mathbb{P}^{\langle\al_i,\xi\rangle+v_i-1}$, since the
fiber consists of the lines in the cokernel of $x_{out}$.  Finally, the diagonal
has  codimension $\langle\al_i,\xi\rangle+2v_i-1$ inside $H^\la_\xi$.  Thus, the map
$\mathbbm{e}_1$ is small if $\langle\al_i,\xi\rangle\geq 0$, and the map $\mathbbm{f}_2$
is small if $\langle\al_i,\xi\rangle\leq 0$.  Let's reduce to the former case for
simplicity.

In this case, $\hat{\mathscr{F}_{i}}\star
  \hat{\mathscr{E}_{i}}$ is an 
  irreducible D-module $Q$, the unique one on $H^\la_\xi$ which extends the pullback of
  the canonical sheaf by the first projection, since it is the pushforward by a small
  resolution of singularities.  On the other hand, $\hat{\mathscr{E}_{i}}\star
  \hat{\mathscr{F}_{i}}$ is the pushforward of a resolution of
  singularities which is not necessarily small, and is thus of the
  form $Q\oplus Q'$ where $Q'$ is a sum of shifts
  of semi-simple D-modules supported on the diagonal.  

Note that  $\hat{X}^\la_{\xi}$ is the quotient of an affine bundle
over a Grassmannian by a connected algebraic group, and thus simply
connected.  Therefore the pullback of $\hat{\mathscr{E}_{i}}\star
  \hat{\mathscr{F}_{i}}$ to the diagonal is the pushforward by a proper algebraic fiber
  bundle to a simply connected space, and thus a sum of shifts of the
 structure sheaf.  When we use Kashiwara's theorem to think of this as
 a D-module on $\hat{X}^\la_{\xi}\times \hat{X}^\la_{\xi}$, we obtain
 a sum of shifts of $\mathcal{D}_{ \Delta}$.  Furthermore, since the fiber
  is $\mathbb{P}^{\langle\al_i,\xi\rangle+v_i-1}$, we know that it is
  the sum $(q^{\langle\al_i,\xi\rangle+v_i-1}+\cdots +
  q^{-\langle\al_i,\xi\rangle+1})\cdot \mathcal{D}_{ \Delta}$.  On the
  other hand, the pullback of $Q$ is $(q^{\langle\al_i,\xi\rangle+v_i-1}+\cdots +
  q^{\langle\al_i,\xi\rangle+1})\cdot \mathcal{D}_{ \Delta}$ by the
  same argument. This is only possible
  if \[Q'\cong (q^{-\langle\al_i,\xi\rangle
    +1}+\cdots +q^{\langle\al_i,\xi\rangle
    -1}) \cdot \mathcal{D}_{ \Delta}.\]
This shows that (\ref{eq:5}) holds.
If we instead $\langle\al_i,\xi\rangle\leq 0$, we can show (\ref{eq:4})
by applying the same
argument, switching the roles of the two sheaves.

Thus, applying \eqref{ii-red} to  the equations
(\ref{eq:4}--\ref{eq:5}), 
we arrive at the desired isomorphisms (\ref{item:1}--\ref{item:2}).
Thus, by \cite[4.13]{RouQH}, we have a 2-functor from $\tU$
to $\EuScript{Q}^\la$ as desired.
\end{proof}
This completes the proof of Theorem  \ref{th:main}.

\bibliography{../gen}
\bibliographystyle{amsalpha}
\end{document}